\newcommand{\editor}{Sudipta Mallik}
\newcommand{\ArticleType}{Research Article}
\newcommand{\ReceivedDate}{Oct 23, 2024}
\newcommand{\RevisedDate}{Feb 19, 2025}
\newcommand{\AcceptedDate}{Feb 21, 2025}
\newcommand{\PublishedDate}{Feb 24, 2025}
\newcommand{\JournalIndex}{Volume 4 (2025), Pages 1--22}
\newcommand{\LastName}{Elder}
\newcommand{\JournalIndexShort}{\LastName\, et al./ American Journal of Combinatorics 4 (2025) 1--22}
\newtheorem{theorem}{Theorem}[section]
\newtheorem{lemma}[theorem]{Lemma}
\newtheorem{corollary}[theorem]{Corollary}
\newtheorem{notation}[theorem]{Notation}
\theoremstyle{definition}
\newtheorem{definition}[theorem]{Definition}
\newtheorem{example}[theorem]{Example}
\newtheorem{remark}[theorem]{Remark}
\newtheorem{proposition}[theorem]{Proposition}
\numberwithin{equation}{section}
\newcommand{\PF}{\mathrm{PF}}
\newcommand{\Le}{\mathrm{Left}} 
\newcommand{\R}{\mathrm{Right}}
\newcommand{\B}{\mathrm{B}}
\newcommand{\F}{\mathrm{F}}
\newcommand{\C}{\mathrm{C}}
\newcommand{\Sym}{\mathfrak{S}}
\newcommand{\Tym}{\mathfrak{T}}
\newcommand{\Pref}{\mathrm{Pref}}
\newcommand{\VPF}{\mathrm{VPF}}
\newcommand{\NN}{\mathbb{N}}
\newcommand{\out}{\mathcal{O}}
\begin{document}
\setcounter{page}{1}
\noindent {\color{teal}\bf\large American Journal of Combinatorics} \hfill \ArticleType\\
\JournalIndex

\title{Pullback parking functions}

\author{Jennifer Elder*, Pamela E. Harris, Lybitina Koene, Ilana Lavene,\\Lucy Martinez, and Molly Oldham}


\affil{\normalsize\rm (Communicated by \editor)
\vspace*{-24pt}}
\date{}

{\let\newpage\relax\maketitle}

\begin{abstract}
 We introduce a generalization of parking functions in which cars are limited in their movement backwards and forwards by two nonnegative integer parameters $k$ and $\ell$, respectively. 
        In this setting, there are $n$ spots on a one-way street and $m$ cars attempting to park in those spots, and $1\leq m\leq n$. We let $\alpha=(a_1,a_2,\ldots,a_m)\in[n]^m$ denote the parking preferences for the cars, which enter the street sequentially. Car $i$ drives to their preference $a_i$ and parks there if the spot is available. Otherwise, car $i$ checks up to $k$ spots behind their preference, parking in the first available spot it encounters if any. If no spots are available, or the car reaches the start of the street, then the car returns to its preference and attempts to park in the first spot it encounters among spots  $a_i+1,a_i+2,\ldots,a_i+\ell$. If car $i$ fails to park, then parking ceases. If all cars are able to park given the preferences in $\alpha$, then $\alpha$ is called a $(k,\ell)$-pullback $(m,n)$-parking function. Our main result establishes counts for these parking functions in two ways: counting them based on their final parking outcome (the order in which the cars park on the street), and via a recursive formula. Specializing $\ell=n-1$, our result gives a new formula for the number of $k$-Naples $(m,n)$-parking functions and further specializing $m=n$ recovers a formula for the number of $k$-Naples parking functions given by Christensen et al. The specialization of $k=\ell=1$, gives a formula for the number of vacillating $(m,n)$-parking functions, a generalization of vacillating parking functions studied by Fang et al., and the $m=n$ result answers a problem posed by the authors. 
    We conclude with a few directions for further study.
\end{abstract}

\renewcommand{\thefootnote}{\fnsymbol{footnote}} 
\footnotetext{\hspace*{-22pt} * Corresponding author\\
MSC2020: 05A05, 05A10;
Keywords: Parking function, Interval parking function, $k$-Naples parking function, $(m,n)$-parking function, Pullback parking function\\
Received \ReceivedDate; Revised \RevisedDate; Accepted \AcceptedDate; Published \PublishedDate \\
\copyright\, The author(s). Released under the CC BY 4.0 International License 
}
\renewcommand{\thefootnote}{\arabic{footnote}}

\section{Introduction}
        Consider the following parking scenario. There are $m$ cars attempting to park on a one-way street with $n$ parking spots (with $n\geq m$). 
        The queue of cars has a list of parking spot preferences represented by a tuple, $\alpha = (a_1,a_2,\ldots,a_m)$, where each $a_i\in[n]=\{1,2,\ldots,n\}$. 
        The cars enter the street in sequential order from car $1$ to car $m$, and firstly will attempt to park in their preference. 
        If that space is unoccupied, then the car parks there. Otherwise, the car proceeds forward and parks in the first available spot it encounters (if there is any). 
        We refer to this parking process as the \textit{classical parking rule}. 
        If the list of parking preferences allows all cars to park among the $n$ spots on the street, then we call the list of preferences an \textit{$(m,n)$-parking function}. 
        For example, when $n=m=4$, the preference list $(1,4,3,2)$ allows all of the cars to park as they each prefer a different parking spot. However, when $m=3$ and $n=4$, the preference list $(1,4,4)$ is not a parking function as the third car finds the fourth spot occupied by the second car, and drives forward to the end of the street, unable to park.
        
        Throughout, we let $\PF_{m,n}$ denote the set of parking functions with $m$ cars and $n$ spots, and when the number of cars and spots is clear from context, we simply refer to this set as the set of parking functions.
        Of course, if $m>n$, then, no matter what the cars' preferences are, there will always be a car that fails to park. 
        Hence, whenever $m>n$, $|\PF_{m,n}|=0$. 
        In the case where $m\leq n$, Konheim and Weiss \cite{konheim1966occupancy} established that $|\PF_{m,n}|=(n+1-m)(n+1)^{m-1}$. 
        In particular, if $n=m$, we let $\PF_n=\PF_{n,n}$, and note that the cardinality of this set of parking functions simplifies nicely to $|\PF_{n}|=(n+1)^{n-1}$.
        There are many generalizations of parking functions and most important to our study are $k$-Naples parking functions and $\ell$-interval parking functions.
        
        Introduced by Baumgardner~\cite{BaumgardnerHonorsContract}, and generalized by Christensen, Harris, Jones, Loving, Ramos Rodr\'{i}guez, Rennie, and Rojas Kirby ~\cite{knaple},
        the set of $k$-Naples parking functions sets $m=n$ and extends the classical parking rule by allowing a car that finds its preferred parking spot occupied to first back up to $k$ spaces, checking one spot at a time, for $0\leq k< n$, in its attempt to park. 
        If possible, the car parks in the first available spot among those $k$ spots. If none of the $k$ spaces before its preferred parking spot are available (or the car reaches the start of the street), then the car continues past its preferred spot and parks in the first available spot beyond it. 
        We call this parking rule the \textit{$k$-Naples parking rule}.
        If the parking preference $\alpha$ allows all cars to park using the $k$-Naples parking rule, then we say that $\alpha$ is a \textit{$k$-Naples parking function of length $n$}. We let $\PF_{n}(k)$ denote the set of $k$-Naples parking functions of length $n$.
        If $k=0$, then $\PF_{n}(0)=\PF_n$. 
        Also, when $k=1$, the set $\PF_{n}(1)$ is called the set of \textit{Naples parking functions} of length $n$. 
        Based on these definitions one can observe that $\PF_{n}(k-1)\subseteq \PF_{n}(k)$ for all $0 < k< n$. 
        The set inclusion occurs because every car that can park when allowed to back up $k-1$ spots can also park when allowed to back up $k$ spots. 
        Christensen et al.~established a recursive formula for the number of $k$-Naples parking functions \cite[Theorem 1.1]{knaple}. 
        In our work, we extend the definition of  $k$-Naples parking functions to the case where there are more parking spaces than cars, and we refer to this set as the set of $k$-Naples $(m,n)$-parking functions. We denote this set by $\PF_{m,n}(k)$.
        In Corollary \ref{cor:knaples mnpfs}, we give a recursion for the number of $k$-Naples $(m,n)$-parking functions for all~$m\leq n$ and $0 \leq k < n$. 
        
  Another generalization of classical parking functions called $\ell$-interval $(m,n)$-parking func-tions was introduced by Aguilar-Fraga, Elder, Garcia, Hadaway, Harris, Harry, Hogan, Johnson, Kretschmann, Lawson-Chavanu,   Mart\'inez Mori, Monroe, Qui\~nonez, Tolson III, and Williams II~\cite{aguilarfraga2024intervalellintervalrationalparking}. 
        In this scenario, a car drives to its preference and parks there if the spot is available. Otherwise, if occupied, then the car drives forward, checks up to $\ell$ spots past its preference, and parks in the first available spot it encounters among those $\ell$ spots, if such a spot exists. 
        If all of the cars can park under the $\ell$-interval parking rule, the list of preferences is an $\ell$-interval parking function.
        For example, if $m=n$ and $\ell=0$, then the cars can only park in their preferred parking spot, so the set of 
        $0$-interval parking functions is precisely the set of permutations on $[n]$, which we denote by $\Sym_n$.
        In the case where $\ell=n-1$, then cars are able to traverse the full street in seeking a parking spot, hence the set of $(n-1)$-interval parking functions is exactly the set $\PF_{m,n}$.
        As a concrete example, if $m=n=3$, then $(1,1,1)$ is not a $1$-interval parking function, because when car $3$ attempts to park, the only remaining unoccupied spot on the street is spot $3$, which is more than one spot ahead of its preferred spot.
        Among their results, Aguilar-Fraga et~al.~provide formulas for the number of $\ell$-interval $(m,n)$-parking functions, see \cite[Theorems  3.7 and 3.8]{aguilarfraga2024intervalellintervalrationalparking}. 
        
        Based on the $k$-Naples and $\ell$-interval parking rules, we consider a new parking rule in which cars are restricted in their backward and forward movement whenever they find their preferred parking spot occupied. 
        This is reminiscent of pullback car toys (see Figure \ref{fig:pullback_toy}), which are a popular children's toy car with a winding mechanism used by placing the car on the floor and rolling it backwards, then releasing, the car is propelled forward some distance. Based on this toy, we use the language of pullback parking rule to describe the behavior of cars when they find their preferred parking spot occupied. 
        We now give the technical definition for this parking rule.
        For fixed nonnegative integers $k$ and  $\ell$, we defined the \textit{$(k,\ell)$-pullback parking rule}:
        given a parking preference $\alpha =(a_1,a_2,\ldots,a_m)\in[n]^m$, for each $i\in[m]$, car $i$ drives to its preferred parking spot $a_i$. If it is available, then the car parks there. If the spot is occupied, then the car checks up to $k$ spots behind its preference, parking in the first available spot it encounters (if any). If all of those parking spots are occupied or the car reaches the start of the street, then the car checks up to $\ell$ spots ahead of its preference, parking in the first available spot it encounters (if any). 
        If the car fails to park within those spots, then the car exits the street without parking.
        If all cars can park using the $(k,\ell)$-pullback parking rule, then the preference list is called a 
        \textit{$(k,\ell)$-pullback $(m,n)$-parking function}.
         We let $\PF_{m,n}(k,\ell)$ denote the set of all $(k,\ell)$-pullback $(m,n)$-parking functions.
        
        \begin{figure}[H]
        
            \centering\includegraphics[width=3in]{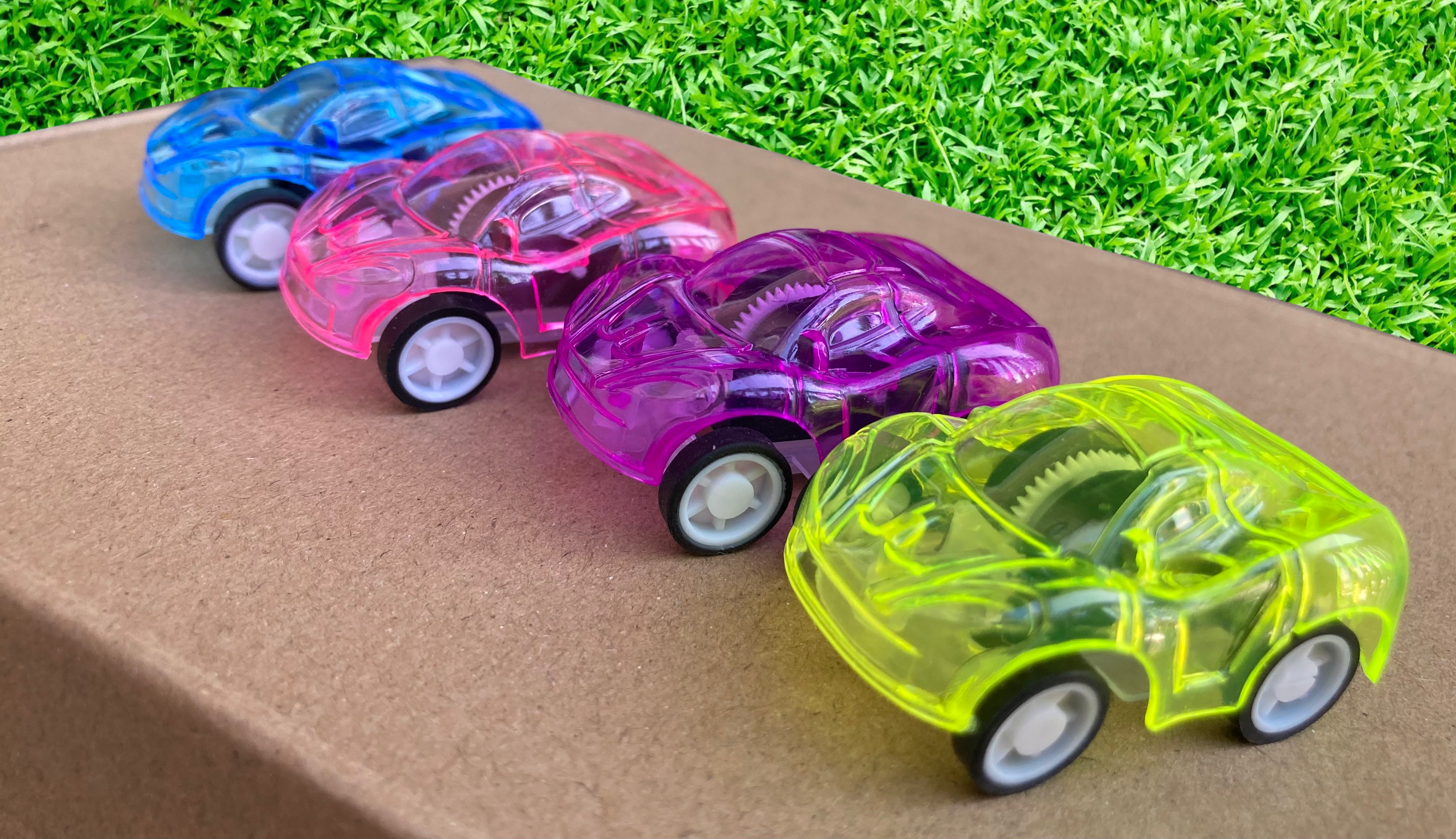}
            
            \caption{Pullback toy cars. Image credit: Lucy Martinez.}
            \label{fig:pullback_toy}
            
        \end{figure}

        In Figure \ref{fig:a}, we illustrate that $(3,2,3,1)\in\PF_{4,5}(1,2)$, since all of the cars are able to park with the given constraints. 
        For the preference 
        $(3,2,2,1)$ using the $(1,2)$-pullback parking rule when $m=4$ and $n=5$, car $4$ finds the first spot occupied by car $3$, and is unable to backup as it has reached the start of the street, so it proceeds forward but finds the two spots past its preference also occupied. 
        Therefore, car $4$ fails to park and  $(3,2,2,1)$ is not a $(1,2)$-pullback $(4,5)$-parking function.

        \begin{figure}[H]
        \centering
        \includegraphics[width=5in]{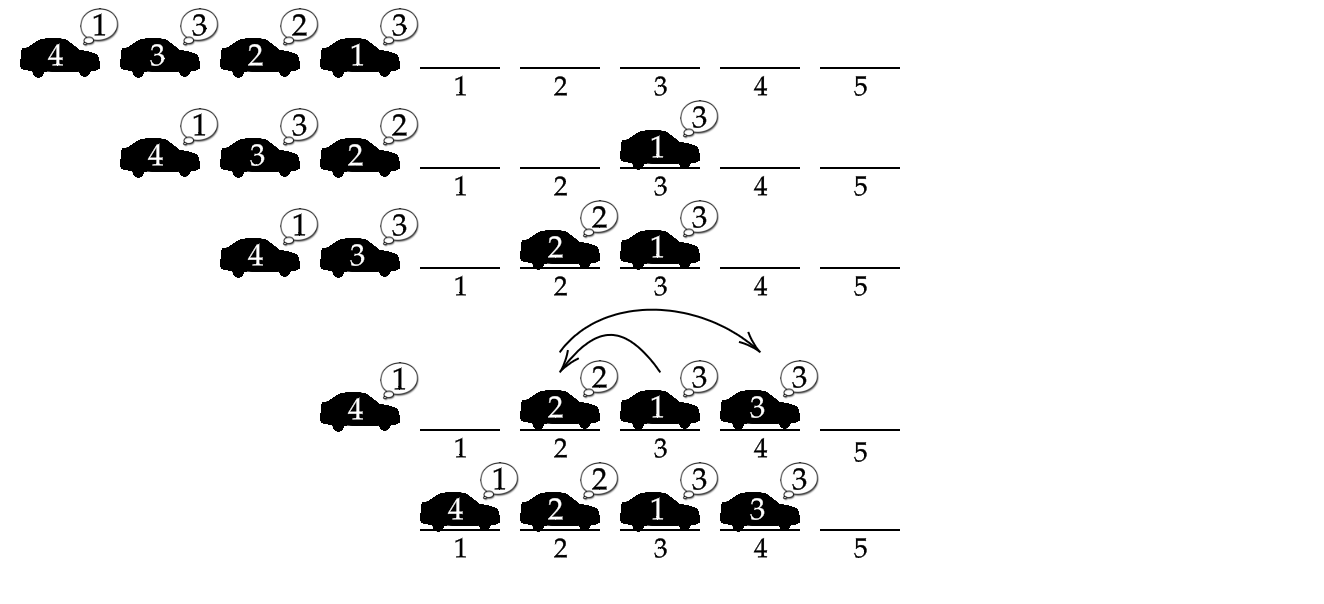}
        \caption{Parking procedure for $\alpha=(3,2,3,1)$ using the $(1,2)$-pullback parking rule.}            
        \label{fig:a}
        \end{figure}
        
        Our major contributions include two formulas to count the number of $(k,\ell)$-pullback $(m,n)$-parking functions for all nonnegative integers $k$ and $\ell$, and all positive integers $1\leq m\leq n$. 
        In Theorem \ref{thm:counting through perms} we use a technique known as ``counting through permutations'', which gives a count for the number of parking functions that park cars in a specified order. The name of this technique comes from the fact that when $m=n$, the order in which the cars park on the street can be described using a permutation in $\Sym_n$. 
        In Theorem \ref{thm:pullback} we provide a purely recursive formula for this count. 
        
        This article is organized as follows. In Section \ref{sec:counting through permutations}, we begin by utilizing the counting parking functions through permutations technique to prove Theorem \ref{thm:counting through perms}, giving our first formula for the number of $(k,\ell)$-pullback $(m,n)$-parking functions. In Corollary \ref{cor:counting through perms}, we give an alternate formula for the number of $k$-Naples $(m,n)$-parking functions.
        In Section \ref{sec:recursive formula}, we recall the formula for the number of $k$-Naples parking functions with the same number of cars and spots \cite[Theorem 1.1]{knaple}, and then prove Theorem \ref{thm:pullback}, which gives a recursive formula for the number of $(k,\ell)$-pullback parking functions with $m$ cars and $n$ spots. Once again, in Corollary \ref{cor:knaples mnpfs}, we give an alternate formula for the number of $k$-Naples $(m,n)$-parking functions.
        We conclude with Section \ref{sec:future}, where we provide directions for future research.

        \begin{remark}
            For code supporting the work in this project we provide the following \href{https://github.com/lybitinakoene/Pullback-Parking-Functions.git}{GitHub repository} \cite{github}. 
        \end{remark}
   
    \section{Counting through permutations}\label{sec:counting through permutations}
        In this section, we recall a technique known in the literature as ``counting through permutations,'' which has been used to count the number of parking preferences parking cars in a specified order.
        To make our approach precise, we begin with a definition. 
        
        \begin{definition}[\cite{countingthroughperms,Spiro}]\label{def:outcome}
            Given a parking function $\alpha=(a_1,a_2,\ldots,a_n)\in[n]^n$, define the \textit{outcome} of $\alpha$ by  $\out (\alpha)=\pi_1\pi_2\cdots\pi_n\in \Sym_n$, where $\pi_i=j$ denotes that car $j$ parked in spot $i$.           
        \end{definition}
        
        \begin{example}
        
            If $\alpha=(1,1,1,2,4,4,5,7)\in\PF_8$,
            then the outcome of $\alpha$ is given by 
            $\out (\alpha)=12345678$.
            If $\alpha=(7,1,5,2,4,1,4,1)\in\PF_8$,
            then the outcome is 
            $\out (\alpha)=24653718$.
        \end{example}
        
        We recall the following result which counts parking functions with a specified outcome.
        
        \begin{proposition} \cite[Proposition 3.3]{countingthroughperms}\label{prop:permutation}
            Let $\sigma = \sigma_1\sigma_2\cdots \sigma_n\in \Sym_n$ be a permutation. 
            The number of parking functions with outcome $\sigma$ is 
            $\prod_{i=1}^n \mathcal{L}(\sigma_i)$,
            where $\mathcal{L}(\sigma_i)$ is the length of the longest subsequence, $\sigma_j,\sigma_{j+1},\ldots, \sigma_i$, of $\sigma$ such that $\sigma_t\leq \sigma_i$ for all $j\leq t\leq i$. 
        \end{proposition}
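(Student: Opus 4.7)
The plan is to condition on the permutation $\sigma$ and, car by car, count how many preferences cause that car to land in its prescribed spot. Fix $\sigma = \sigma_1 \sigma_2 \cdots \sigma_n \in \Sym_n$. An outcome of $\sigma$ forces car $\sigma_i$ to park in spot $i$, but the cars enter the street in the order $1, 2, \ldots, n$, not in the order dictated by $\sigma$. I would first prove, by induction on the arrival time $c$, that if cars $1, 2, \ldots, c-1$ have all parked in the spots dictated by $\sigma$, then when car $c = \sigma_i$ arrives the occupied spots are exactly $\{\, j \in [n] : \sigma_j < \sigma_i \,\}$.

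Next, I would determine which preferences $p \in [n]$ lead car $\sigma_i$ to park in spot $i$, given that street state. Under the classical rule, the car lands in spot $i$ precisely when $p \le i$, every spot $p, p+1, \ldots, i-1$ is occupied when car $\sigma_i$ arrives, and spot $i$ is free. Spot $i$ is automatically free (no earlier car can have parked there, since a permutation is injective), so the requirement reduces to $\sigma_t < \sigma_i$ for all $t \in \{p, p+1, \ldots, i-1\}$. Letting $j$ denote the smallest index for which $\sigma_t \le \sigma_i$ holds throughout $[j,i]$, the valid preferences are exactly $p \in \{j, j+1, \ldots, i\}$, which has cardinality $i - j + 1 = \mathcal{L}(\sigma_i)$.

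Finally, I would combine these counts multiplicatively. The inductive step above says that once the first $c-1$ cars land in the correct spots, any choice from the $\mathcal{L}(\sigma_i)$ valid preferences for car $\sigma_i$ (with $i = \sigma^{-1}(c)$) makes car $c$ land in spot $i$. Hence the choices are independent across cars, and the number of preference lists $\alpha$ with $\out(\alpha) = \sigma$ equals $\prod_{i=1}^n \mathcal{L}(\sigma_i)$.

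The main obstacle is the bookkeeping in the inductive step: one has to be careful that the valid preferences for car $\sigma_i$ depend only on the positions $t < i$ with $\sigma_t < \sigma_i$, and not on what any later-arriving cars do. Once that separation is established, the product formula is immediate; the rest is verifying that the preference range $\{j, \ldots, i\}$ is correctly characterized by the maximal run of entries bounded above by $\sigma_i$ ending at position $i$.
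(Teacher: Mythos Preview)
Your argument is correct and is the standard one. Note, however, that the paper does not actually prove this proposition: it is quoted as Proposition~3.3 of \cite{countingthroughperms} and merely recalled here as background. So there is no in-paper proof to compare against.

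That said, your approach is exactly the mechanism the paper then generalizes in Section~\ref{sec:counting through permutations} to the $(k,\ell)$-pullback setting: fix an outcome, determine for each car the set of preferences that land it in its prescribed spot given that all earlier cars are already in place, observe that these choices are independent, and multiply. In the classical case your run condition $\sigma_t<\sigma_i$ for $t\in[p,i-1]$ yields the single quantity $\mathcal{L}(\sigma_i)$; in the pullback case the paper splits the analogous count into $\B(\pi_i)+\F(\pi_i)+1$ via Lemmas~\ref{lem:B} and~\ref{lem:F}. Your inductive bookkeeping (that the occupied set when car $c=\sigma_i$ arrives is exactly $\{j:\sigma_j<\sigma_i\}$, independent of the specific earlier preferences chosen) is precisely the ``separation'' that makes the product formula work, and it is handled correctly.
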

        
        Proposition \ref{prop:permutation} was used to give the number of parking functions by counting through permutations. We recall this result next.
        
        \begin{corollary}[{\cite[Corollary 3.5]{countingthroughperms},\cite[Exercise 5.49(d,e)]{StanleyECVol2}}]
        
            It follows that the number of parking functions is given by 
            $|\PF_n|=\sum_{\sigma\in \Sym_n}\left(\prod_{i=1}^n \mathcal{L}(\sigma_i)\right)=(n+1)^{n-1}$.
        \end{corollary}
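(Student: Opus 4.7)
The plan is to establish the two equalities in the corollary in sequence. For the first equality $|\PF_n|=\sum_{\sigma \in \Sym_n}\prod_{i=1}^n \mathcal{L}(\sigma_i)$, I would observe that since every $\alpha \in \PF_n$ has its $n$ cars park into the $n$ spots, the outcome $\out(\alpha)$ from Definition 2.1 is a bijection from spots to cars, hence an element of $\Sym_n$. This gives a well-defined map $\out: \PF_n \to \Sym_n$ whose fibers partition $\PF_n$, so $|\PF_n| = \sum_{\sigma \in \Sym_n} |\out^{-1}(\sigma)|$. Applying Proposition~\ref{prop:permutation} to evaluate each fiber size as $\prod_{i=1}^n \mathcal{L}(\sigma_i)$ yields the first equality.

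For the second equality $\sum_{\sigma \in \Sym_n}\prod_{i=1}^n \mathcal{L}(\sigma_i)=(n+1)^{n-1}$, I would invoke the classical Konheim--Weiss formula $|\PF_n|=(n+1)^{n-1}$ from \cite{konheim1966occupancy}, already recalled in the introduction, and combine it with the first equality.

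The only conceptual subtlety is confirming that $\out$ is well-defined on all of $\PF_n$ and that partitioning by $\out$ accounts for every parking function exactly once; both are immediate from the definition of $\PF_n$ (every car parks) together with the fact that distinct $\alpha,\alpha' \in \PF_n$ may share an outcome but each $\alpha$ has exactly one. There is no genuine obstacle: the corollary is a one-step consequence of Proposition~\ref{prop:permutation} combined with the known closed-form count. Its role is primarily expository, illustrating the counting-through-permutations template that the authors will generalize to the $(k,\ell)$-pullback setting in Theorem~\ref{thm:counting through perms}.
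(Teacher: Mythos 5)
Your proposal is correct and matches the intended derivation: the paper states this corollary as a recalled result (citing Colmenarejo et al.\ and Stanley) without reproving it, and the standard argument is exactly yours --- partition $\PF_n$ into fibers of the outcome map over $\Sym_n$, evaluate each fiber by Proposition~\ref{prop:permutation}, and identify the total with the Konheim--Weiss count $(n+1)^{n-1}$. No gaps; this is the same counting-through-permutations template the paper later generalizes in Theorem~\ref{thm:counting through perms}.
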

        
        We extend Definition 
        \ref{def:outcome} for the case where there are more parking spots than~cars.
        
        \begin{definition}\cite[Definition 3.1]{aguilarfraga2024intervalellintervalrationalparking}\label{def:set Smn}
            Let $\Sym_{m,n}$ denote the set of permutations of the multiset $\{0,\ldots,0\}\cup[m]$ with $n-m$ zeros.
            Given an $(m,n)$-parking function $\alpha=(a_1,a_2,\ldots,a_m)$, define the \textit{outcome} of $\alpha$ by $\mathcal{O}(\alpha)=\pi_1\pi_2\cdots\pi_n\in \Sym_{m,n}$ where $\pi_i=j\in[m]$ denotes that car $j$ parked in spot $i$, and $\pi_i=0$ indicates that spot $i$ is vacant.
        
        \end{definition}
        
        \begin{definition}
        Let $\pi=\pi_1\pi_2\cdots\pi_n\in \Sym_{m,n}$. Then, for each $i\in[n]$ with $\pi_i>0$, 
            let $\Pref(\pi_i)$ denote the set of preferences of car $\pi_i$ such that it is the $i$th car parked on the street.
        \end{definition}
        
        Our goal is to answer the following question: Given $\pi=\pi_1\pi_2\cdots\pi_n\in \Sym_{m,n}$, how many $(k,\ell)$-pullback $(m,n)$-parking functions, $\alpha\in \PF_{m,n}(k,\ell)$, have outcome $\out (\alpha)=\pi$?
        Before stating this result, we provide the following illustrative example. 
        
        \begin{example}\label{ex:pref_count}
        
            Let $m=8$, $n=11$, $k=1$, $\ell=2$ and consider $\pi = 08134005672$.
            Then we make the following observations:            \begin{itemize}
            
                \item Car $1$ parked in the $3$rd spot on the street. Since it was the first car entering the street, it must have preferred spot $3$. Hence, $|\Pref(\pi_3 = 1)|=|\{3\}|=1$.
                
                \item Car $2$ parked in the $11$th spot on the street. Since there were no cars already parked immediately to the left or right of this spot, car $2$ could have only preferred spot $11$. Hence, $|\Pref(\pi_{11} = 2)|=|\{11\}|=1$.
                
                \item Car $3$ parked in the $4$th spot on the street. Even though car $1$ was already parked immediately to the left of spot $4$, if car $3$ had preferred spot $3$, it would have backed up and parking in the $2$nd spot and would not have ended up in the $4$th spots, so car $3$ could have only preferred spot $4$ and parked there. Hence, $|\Pref(\pi_4 = 3)|=|\{4\}|=1$.
                
                \item Car $4$ parked in the $5$th spot on the street. Notice that cars $1$ and $3$ have  parked to the immediate left of spot $5$. 
                If car $4$ had preferred spot $3$, it would have backed up and parked in the $2$nd spot and would not have ended up in spot $5$. However, car $4$ could have preferred spot $4$ because, since $k=1$, it would have backed up to spot $3$ first and, finding it occupied, then pulled forward parking in spot $5$, which is within its tolerance as $\ell=2$. Car $4$ could have also preferred spot $5$ and parked there. Hence, $|\Pref(\pi_5 = 4)|=|\{4,5\}|=2$.

                \item Car $5$ parked in the $8$th spot on the street. Since there were no cars parked to the immediate left or right of spot $8$, car $5$ could have only preferred spot $8$ and parked there. Hence, $|\Pref(\pi_8 = 5)|=|\{8\}|=1$.
                
                \item Car $6$ parked in the $9$th spot on the street. Car $6$ could not have preferred spot $8$, as it would have backed up and parking in spot $7$ and would not have ended up parking in spot $9$. Thus, car $6$ could only have preferred spot $9$ and parked there. Hence, $|\Pref(\pi_9 = 6)|=|\{9\}|=1$.
                
                \item Car $7$ parked in the $10$th spot on the street. 
                If car $7$ preferred spot $9$ it would find it occupied and since $k=1$, it also would have found spot $8$ occupied. Then moving forward, it would park in spot in $10$.
                If car $7$ preferred spot $11$, finding it occupied it could have  backed into spot $10$. Finally, it could also be that car $7$ preferred spot $10$ and parked there. Hence, $|\Pref(\pi_{10} = 7)|=|\{9,10,11\}|=3$.
                
                \item Car $8$ parked in the $2$nd spot on the street. 
                If car $8$ preferred spot $3$, finding it occupied, it would back into spot $2$. 
                If car $8$ preferred spot $2$, it would park there. Hence, $|\Pref(\pi_2 = 8)|=|\{2,3\}|=2$.
            \end{itemize}
            
            The total number of parking functions, $\alpha = (a_1,a_2,\ldots,a_8)\in\PF_{8,11}$, that park the cars in order $\pi$ must satisfy $a_i \in \Pref(\pi_j = i)$ for each $i\in [m]$. 
            Thus, the total number of these parking functions is given by the product
            $
            \prod_{i=1}^m|\Pref(\pi_j = i)| = 1\cdot 1\cdot 1\cdot 2\cdot 1\cdot 1\cdot 3\cdot 2=12$.
        \end{example}

As Example \ref{ex:pref_count} illustrates, determining the possible preferences of  car $i$ requires knowledge on what cars are parked immediately to the left and to the right of car $i$. 
The intuition here is that a car could have 
\begin{enumerate}
\item found its preferred spot occupied and the car backed into its final parking spot. This would only happen if it preferred a spot to the right of where it parked, and it parked by having checked no more than $k$ spots to the left of its preference; \label{item:case 1}
\item found its preferred spot occupied and the car moved forward into its final parking spot. This would only happen if 
its preference was to the left of where it parked, and 
it had already checked at most $k$
spots to the left of its preference finding all of those spots occupied. Then the car would return to its preference and find its final parking spot within $\ell$ spots to the right of its preference; \label{item:case 2} 
\item found its preferred spot available, and parked there. \label{item:case 3} 
\end{enumerate}
        
        To count the number of pullback parking functions that park the cars in the fixed order $\pi\in \Sym_{m,n}$, requires us to determine the counts for the preferences, which we do using the three cases above. We define this next.

        \begin{definition}\label{def:cases_for_prefs}
            Fix $\pi=\pi_1\pi_2\cdots\pi_n\in \Sym_{m,n}$. 
        Then, for each $i\in[n]$ with $\pi_i>0$, let  
        \begin{itemize}
            \item $\B(\pi_i)$ be the number of preferences for car $\pi_i$ satisfying Case (\ref{item:case 1}), and
            \item $\F(\pi_i)$ be the number of preferences for car $\pi_i$ satisfying Case (\ref{item:case 2}).
        \end{itemize}
        \end{definition}

        Next we define a function which counts cars that arrived prior to car $\pi_i$ and parked contiguously to the immediate right of car $\pi_i$. 

\begin{definition}\label{def:LookRight}
        Fix $\pi=\pi_1\pi_2\cdots\pi_n\in \Sym_{m,n}$. 
        Then, for each $i\in[n]$ with $\pi_i>0$, 
            let $\R(\pi_i)$ be the length of the longest consecutive subsequence, $\pi_{i+1},\pi_{i+2},\ldots,\pi_{i+x}$, such that $0<\pi_t<\pi_i$ for all $i+1\leq t\leq i+x$.
        \end{definition}

Next we define a function which counts cars that arrived prior to car $\pi_i$ and parked contiguously to the immediate left of car $\pi_i$. 

        \begin{definition}\label{def:LookLeft}
Fix $\pi=\pi_1\pi_2\cdots\pi_n\in \Sym_{m,n}$. 
        Then, for each $i\in[n]$ with $\pi_i>0$, 
            let $\Le(\pi_i)$ be the length of the longest consecutive subsequence, $\pi_y,\pi_{y+1},\ldots,\pi_{i-1}$, such that $0<\pi_t<\pi_i$ for all $y\leq t \leq i-1$.
        \end{definition}

We can now give formulas for the numbers $\B(\pi_i)$ and $\F(\pi_i)$.

        \begin{lemma}
        \label{lem:B}
        Let $\alpha\in\PF_{m,n}$ with outcome permutation $\pi=\pi_1\pi_2\cdots\pi_n\in \Sym_{m,n}$. 
        For each $i\in[n]$ with $\pi_i>0$, if car $\pi_i$ found its preferred spot occupied and backed into its final parking spot at position $i$ (as described in Case \ref{item:case 1}), then the number of preferences for car $\pi_i$ is given by $\B(\pi_i)=\min(\R(\pi_i),k)$. Moreover, whenever $\pi_i=0$, then $\B(0)=0$.
        \end{lemma}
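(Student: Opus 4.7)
The plan is to parametrize the preferences compatible with Case \ref{item:case 1} by how far back the car travels. Suppose car $\pi_i$ occupies spot $i$ by backing up; then its preference must be some spot $a = i + j$ with $j \geq 1$, and since the car is only willing to check $k$ spots behind its preference, we must have $j \leq k$. So the candidate preferences are indexed by $j \in \{1, 2, \ldots, k\}$, and the task is to characterize which of these actually result in the car landing in spot $i$.

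Next, I would translate ``the car backs from $i+j$ and parks in $i$'' into conditions on the outcome permutation $\pi$. For the car to traverse spots $i+j-1, i+j-2, \ldots, i+1$ without parking and then settle in spot $i$, each of these intermediate spots must already be occupied when car $\pi_i$ arrives, the preference spot $i+j$ itself must be occupied (otherwise the car would have parked at its preference), and spot $i$ must be empty. The last condition holds automatically because $\pi_i$ parks in spot $i$. The first two conditions say that for every $t \in \{1,2,\ldots,j\}$, the car parked in spot $i+t$ arrived strictly before car $\pi_i$, which in terms of the outcome permutation reads $0 < \pi_{i+t} < \pi_i$.

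Now I would match this condition directly with Definition \ref{def:LookRight}: the set of $j \geq 1$ for which $0 < \pi_{i+t} < \pi_i$ for all $1 \leq t \leq j$ is precisely $\{1, 2, \ldots, \R(\pi_i)\}$. Intersecting with the constraint $j \leq k$ yields $j \in \{1, 2, \ldots, \min(\R(\pi_i), k)\}$, giving $\B(\pi_i) = \min(\R(\pi_i), k)$. Finally, when $\pi_i = 0$ the $i$th spot is vacant, so there is no car to assign a preference to, and the convention $\B(0) = 0$ is immediate. The only subtle point, which I would flag but which requires no further argument once the conditions above are verified, is that under these constraints the car's backup terminates at spot $i$ rather than running off the start of the street; this is automatic since the car backs up at most $j \leq k$ steps and finds spot $i$ empty before needing to continue.
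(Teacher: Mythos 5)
Your proof is correct and follows essentially the same approach as the paper's: identify the occupied contiguous block to the right of spot $i$ via $\R(\pi_i)$ and observe that the preference must lie among the first $\min(\R(\pi_i),k)$ of those spots. Your version is somewhat more explicit in verifying that each candidate preference $i+j$ with $1\leq j\leq \min(\R(\pi_i),k)$ actually causes the car to land in spot $i$, but the underlying argument is the same.
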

        \begin{proof}
            By \Cref{def:LookRight},  $\R(\pi_i)$ counts the number of cars $\pi_{i+1},\pi_{i+2},\ldots,\pi_{i+x}$ which parked before car $\pi_i$ entered the street and did so to the immediate right (and contiguously) of the spot $i$ in which car $\pi_i$ ultimately parks.
            Namely, the cars $\pi_{i+1},\pi_{i+2},\ldots,\pi_{i+x}$ parked in spots $i+1,i+2,\ldots,i+x$.
            Since car $\pi_i$ could prefer any of the first $k$ of these occupied spots in order to then back into the $i$th spot, we must have that  $\B(\pi_i)=\min(\R(\pi_i),k)$.

            Whenever $\pi_i=0$, this is not car, so it cannot have any preferences, hence $\B(0)=0$.
        \end{proof}

        Now, it also could be that car $\pi_i$ satisfies Case (\ref{item:case 2}), where the car parked in spot $i$ by first checking $k$ spaces back from its preference and then going forward up to $\ell$ spaces to park in spot $i$. To account for these preferences, we consider the cars that parked immediately to the left of spot $i$ that parked before car $\pi_i$ entered the street.

        \begin{lemma}\label{lem:F}
            Let $\alpha\in\PF_{m,n}$ with outcome permutation $\pi=\pi_1\pi_2\cdots\pi_n\in \Sym_{m,n}$. 
        For each $i\in[n]$, if 
        car $\pi_i$ found its preferred spot occupied and moved forward into its final parking spot (as described in Case \ref{item:case 2}), then 
        the number of preferences for car $\pi_i$ is given by 
            \begin{align*}
                \F(\pi_i) =
                \begin{cases}
                0&\text{if $\pi_i=0$}\\
                0&\text{if $\Le(\pi_i)=0$}\\
                    \min(i-1,\ell) & \text{if $0<\Le(\pi_i)=i-1$}\\
                    \max(\min(\Le(\pi_i)-k,\ell),0) & \text{if $0<\Le(\pi_i)<i-1$}.\\
                \end{cases}
            \end{align*}
            
        \end{lemma}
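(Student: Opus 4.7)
Let $L=\Le(\pi_i)$. The plan is to reinterpret $L$ geometrically, translate Case~(\ref{item:case 2}) into inequalities on a candidate preference $p$, and then count lattice points in the resulting interval. By Definition~\ref{def:LookLeft}, at the instant car $\pi_i$ enters the street the occupied spots forming a contiguous block ending at $i-1$ are exactly $\{i-L,\,i-L+1,\ldots,i-1\}$; moreover, when $0<L<i-1$, spot $i-L-1\geq 1$ is unoccupied at that moment (it is either empty or held by a later-arriving car, since otherwise the block would extend further left). Case~(\ref{item:case 2}) says car $\pi_i$ prefers some $p<i$, finds spot $p$ occupied, has its entire backward check blocked (either by occupied spots or by reaching position~$1$), then moves forward to the first available spot, which is $i$. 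This translates to: (i) $\max(1,i-\ell)\leq p\leq i-1$; (ii) every spot in $\{p,p+1,\ldots,i-1\}$ is occupied when $\pi_i$ arrives, which by the geometric picture forces $p\geq i-L$; (iii) every spot in $[\max(p-k,1),\,p-1]$ is occupied when $\pi_i$ arrives.

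The three degenerate sub-cases then fall out quickly. If $\pi_i=0$, there is no car, so $\F(0)=0$ by convention. If $L=0$, then (ii) forces $p\geq i$ while (i) demands $p\leq i-1$, so no $p$ qualifies. If $L=i-1$, every spot in $\{1,\ldots,i-1\}$ is occupied, so (iii) always holds (the backward check either reaches the start of the street or is blocked by $k$ contiguous occupied spots); the valid preferences are exactly $p\in[\max(1,i-\ell),\,i-1]$, giving the count $\min(i-1,\ell)$.

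The main sub-case is $0<L<i-1$, where the key obstruction is the empty gap at position $i-L-1$. Since $p\geq i-L$ by (ii), we have $p-1\geq i-L-1$, so the gap sits inside the backward-check range $[\max(p-k,1),\,p-1]$ precisely when $\max(p-k,1)\leq i-L-1$. If $p\leq k$, the range is $[1,p-1]$, which always contains $i-L-1$, so (iii) fails and no such $p$ is valid. If $p>k$, then (iii) is equivalent to $p-k\geq i-L$, i.e.\ $p\geq i-L+k$; and this automatically implies $p>k$ here, since $i-L\geq 2$. Intersecting $p\geq i-L+k$ with (i) yields $p\in[\max(i-L+k,\,i-\ell),\,i-1]$, a set of cardinality $\max(\min(L-k,\ell),0)$, matching the stated formula.

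The main obstacle, although not technically deep, is the sub-case $0<L<i-1$: one must recognize that the single unoccupied gap at $i-L-1$ simultaneously eliminates every preference $p\leq k$, and then track the two independent lower bounds $p\geq i-L+k$ and $p\geq i-\ell$ together with the upper bound $p\leq i-1$ to obtain the $\max(\min(\cdot),0)$ count in closed form.
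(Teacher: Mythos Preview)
Your proposal is correct and follows essentially the same approach as the paper's proof. Both arguments hinge on the same geometric observation: when $0<L<i-1$, the spot $i-L-1$ is empty at the moment car $\pi_i$ arrives, and this gap is precisely what rules out preferences too close to the left edge of the occupied block. The paper organizes the main sub-case by comparing $k$ with $L$ (splitting into $k<\Le(\pi_i)$ versus $k\geq\Le(\pi_i)$), whereas you organize it by comparing the candidate preference $p$ with $k$; these are equivalent bookkeeping choices that collapse to the same interval-counting computation $\max(\min(L-k,\ell),0)$.
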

        
        \begin{proof}
        By definition of $\F(\pi_i)$ if $\F(\pi_1)=0$, as there are no cars to the left of $\pi_1$, which agrees with the given formula as $\min(1-1,\ell)=0$. Moreover, 
        for any $1<i\leq n$ such that $\Le(\pi_i)=0$, there are no cars to the left, hence $\F(\pi_i)=0$.
        Lastly, if $\pi_i=0$ this is not a car and so it cannot have preferences, namely $\F(0)=0$.
        We now consider $1<i\leq n$ for which $\pi_i>0$ and 
            argue using three cases: 
            \begin{enumerate}
            \item all spots to the left of spot $i$ are occupied before car $\pi_i$ parks, i.e., $\Le(\pi_i)=i-1$ \label{item: all spots to the left}
            \item not all spots to the left of spot $i$ 
            are occupied, i.e., $\Le(\pi_i)< i-1$, and 
            \begin{enumerate}
            \item $k<\Le(\pi_i)$, or
        \label{item: not all spots to the left}
            \item  $k\geq\Le(\pi_i)$.\label{item: not all spots to the left 2}
        \end{enumerate}
        \end{enumerate}
        
            For (\ref{item: all spots to the left}): 
            In this case all of the spots $1,2,\ldots i-1$ are occupied, so regardless of the value of $k$, if car $\pi_i$ prefers any of those spots, backing up any amount it will always find occupied spots or reach the start of the street. So the only parameter affecting the preferences for car $i$ is $\ell$.
            If all spots to the left of spot $i$ are filled before car $\pi_i$ enters to park, then $\Le(\pi_i)=i-1$. 
            This means spots $1,2,\ldots,i-1$ are occupied by cars $\pi_1,\pi_2,\ldots,\pi_{i-1}$, where those cars arrived and parked before car $\pi_i$ entered the street.
            Since car $\pi_i$ can only move forward $\ell$ spaces, it can only have preferred up to $\ell$ of the spaces to the left of spot $i$, namely the spots $i-\ell,i+1-\ell\ldots,i-1$. However, we need to account for the back that $i-\ell$ may be negative, hence the number of possible spots that car $\pi_i$ can prefer to as to move forward and park in spot $i$ is limited to $\min(\Le(\pi_i),\ell)=\min(i-1,\ell)$.            

            For (\ref{item: not all spots to the left}): Assume that not all spots to the left of $i$ are occupied and $k<\Le(\pi_i)$.
            Let $x,x+1,\ldots,i-1$ be the sequence of occupied spots where spot $x-1$ is empty, and $x>1$. Then $\Le(\pi_i)=(i-1)-x+1=i-x$, and hence $x=i-\Le(\pi_i)$.
            Let $a_i$ be the preference for car $\pi_i$. 
            If $x\leq a_i\leq x+k-1$, then car $\pi_i$ would find its preference occupied and checking up to $k$ spots behind its preference car $\pi_i$ would find spot $x-1$ empty, and would then park there. 
            Contradicting that car $\pi_i$ parks in spot $i$. 
            Thus $a_i\geq x+k$ (and recall in this case $a_i\leq i$). That is, car $\pi_i$ can
            prefer spots $x+k,x+k+1,\ldots,i-1$ and hence 
            it can have $\Le(\pi_i)-k$ preferences to the left of where it parked.
            However, in order for car $\pi_i$ to ultimately  park in spot $i$, these preferences must also be no more than $\ell$ spots from spot $i$. 
            These possible preferences are then limited to the parking spots numbered $i-\ell,i-\ell+1,\ldots,i-1$. 
            These conditions require that the preference $a_i$ for car $\pi_i$ satisfy
            \begin{align}
            x+k\leq & a_i\leq i-1\mbox{ and}\\
            i-\ell\leq & a_i\leq i-1.
            \end{align}
        As both inequalities must hold, we have that 
        $\max(x+k,i-\ell)\leq a_{i}\leq i-1$.
        Hence, since $a_i$ satisfies this inequality, the number of preferences that $a_i$ can have is 
        \[i-1-(\max(x+k,i-\ell))+1=i-\max(x+k,i-\ell).\]
        Now note that 
        \begin{align}
            i-\max(x+k,i-\ell)&=i-(-\min(-x-k,-i+\ell)=i+\min(-x-k,-i+\ell)\\
            &=\min(i-x-k,\ell).\label{eq:need to substitute}
        \end{align}
        Substituting $x=i-\Le(\pi_i)$ into \Cref{eq:need to substitute} yields
        \begin{align*}
        i-\max(x+k,i-\ell)&=\min(i-i+\Le(\pi_i)-k,\ell)=\min(\Le(\pi_i)-k,\ell).
        \end{align*}
Hence, the number of preferences is $\F(\pi_i)=\min(\Le(\pi_i)-k,\ell)$. As $\Le(\pi_i)-k> 0$, then 
$\min(\Le(\pi_i)-k,\ell)=\max(\min(\Le(\pi_i)-k,\ell),0)$. Therefore 
$\F(\pi_i)=\max(\min(\Le(\pi_i)-k,\ell),0)$, as claimed.

For (\ref{item: not all spots to the left 2}):  Next, we consider the case where $k\geq \Le(\pi_i)$. In this case, car $\pi_i$ could not have preferred any of the $\Le(\pi_i)$ spaces to the left of spot $i$, because then it would have backed up past those $\Le(\pi_i)$ spaces and parked in an available spot, contradicting that car $\pi_i$ parks in spot $i$.
So, in this case, car $\pi_i$ can prefer $0$ spots to the left of spot $i$. Thus, since $k\geq \Le(\pi_i)$, $\min(\Le(\pi_i)-k,\ell)$ is now negative, when we need it to be zero. Hence, the number of  preferences for car $\pi_i$ must satisfy 
$\F(\pi_i)=\max(\min(\Le(\pi_i)-k,\ell),0)$, as~desired.
\end{proof}

These lemmas establish the following result.
        
\begin{corollary}
        \label{cor:prefs for car i}
            Fix $\pi\in \Sym_{m,n}$. Then for each $i\in[n]$ with $\pi_i>0$, the number of preferences for car $\pi_i$ is given by 
           $
    |\Pref(\pi_i)|=\B(\pi_i)+\F(\pi_i) + 1.
            $
    
        \end{corollary}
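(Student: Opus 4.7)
The plan is to observe that the three scenarios labeled (\ref{item:case 1}), (\ref{item:case 2}), (\ref{item:case 3}) preceding Definition \ref{def:cases_for_prefs} form a partition of the set of possible preferences for car $\pi_i$, based on where the preferred spot lies relative to the final parking spot $i$. Specifically, if $a_i$ is the preference for car $\pi_i$, then exactly one of the following holds: $a_i>i$ (the car backs up into spot $i$, Case (\ref{item:case 1})), $a_i<i$ (the car moves forward into spot $i$, Case (\ref{item:case 2})), or $a_i=i$ (the car parks at its preference, Case (\ref{item:case 3})). These cases are mutually exclusive and exhaustive, so $|\Pref(\pi_i)|$ equals the sum of the counts in each case.

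First I would verify mutual exclusivity and exhaustiveness by a brief case analysis: since car $\pi_i$ ultimately parks in spot $i$, any valid preference $a_i$ must either equal $i$ (handled by Case (\ref{item:case 3})), be strictly greater than $i$ (so the car found its preferred spot occupied and backed up into spot $i$, handled by Case (\ref{item:case 1})), or be strictly less than $i$ (so the car either found its preferred spot occupied or was deflected by the pullback mechanism and ultimately moved forward to spot $i$, handled by Case (\ref{item:case 2})). No preference $a_i$ can simultaneously satisfy two of these, so the three cases partition $\Pref(\pi_i)$.

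Next I would apply the established lemmas: Lemma \ref{lem:B} gives that the number of preferences in Case (\ref{item:case 1}) equals $\B(\pi_i)$, and Lemma \ref{lem:F} gives that the number of preferences in Case (\ref{item:case 2}) equals $\F(\pi_i)$. For Case (\ref{item:case 3}), the only valid preference is $a_i=i$ itself, contributing exactly $1$. Summing these partition counts gives
\[
|\Pref(\pi_i)|=\B(\pi_i)+\F(\pi_i)+1,
\]
as desired.

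I don't expect any serious obstacle here, since the two lemmas already do the heavy lifting. The only subtle point to be careful about is to confirm that the preferences counted by $\B(\pi_i)$ and $\F(\pi_i)$ are genuinely disjoint from one another and from the singleton $\{i\}$; this disjointness is immediate from the lemma statements, since $\B$ counts preferences strictly to the right of spot $i$ and $\F$ counts preferences strictly to the left. A short concluding sentence should also cover the degenerate case $\pi_i=0$, although strictly speaking the statement restricts to $\pi_i>0$, so no action is needed there.
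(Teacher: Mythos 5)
Your proposal is correct and follows essentially the same route as the paper: the paper's proof likewise combines Lemmas \ref{lem:B} and \ref{lem:F} with the three-way case split from Definition \ref{def:cases_for_prefs}, adding $1$ for the car parking at its own preference. Your explicit verification that the three cases are disjoint and exhaustive is a slightly more careful write-up of the same argument.
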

        \begin{proof}
            It follows from Lemmas~\ref{lem:B} and \ref{lem:F}, that, if $\Pref(\pi_i)$ denotes the set of possible preferences of an actual car $\pi_i$, namely $\pi_i>0$, then by Definition \ref{def:cases_for_prefs}, the number of preferences is given by $\F(\pi_i)+\B(\pi_i)+1$, where the 1 comes from a car being able to park in its preference.
        \end{proof}

Whenever $\pi_i=0$, we have shown that $\F(0)=\B(0)=0$, but for convenience we let $|\Pref(\pi_i=0)|=1$.
        We can now use Corollary \ref{cor:prefs for car i} to count the number of  $(k,\ell)$-pullback $(m,n)$-parking functions parking the cars in the order $\pi$.

        \begin{lemma}\label{lem:parking functions for perm pi}
            Let $\mathcal{O}^{-1}(\pi)$ denote the set of pullback parking functions with $m$ cars and $n$ spots, $m\leq n$, parking the cars in the order $\pi$. Fix $\pi=\pi_1\pi_2\cdots\pi_n\in \Sym_{m,n}$. For any nonnegative integers $k,\ell$ and positive integers $1\leq m\leq n$, we have that
            \begin{align*}
                |\mathcal{O}^{-1}(\pi)|=|\{\alpha\in\PF_{m,n}(k,\ell):\mathcal{O}(\alpha)=\pi\}|=\left(\prod_{i=1}^n [\B(\pi_i)+\F(\pi_i)+1]\right).
            \end{align*}
            
        \end{lemma}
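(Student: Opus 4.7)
The plan is to derive the formula directly from Corollary \ref{cor:prefs for car i} by establishing a bijection between $\mathcal{O}^{-1}(\pi)$ and the Cartesian product $\prod_{i=1}^{n} \Pref(\pi_i)$, using the convention that $\Pref(\pi_i)=\{\star\}$ is a singleton whenever $\pi_i = 0$. Once this bijection is in place, the lemma follows from the multiplication principle, because $|\Pref(\pi_i)| = \B(\pi_i)+\F(\pi_i)+1$ when $\pi_i>0$ by Corollary \ref{cor:prefs for car i}, and $|\Pref(\pi_i)| = 1 = \B(0)+\F(0)+1$ when $\pi_i = 0$ by convention.

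First, I would define the map $\Phi \colon \mathcal{O}^{-1}(\pi) \to \prod_{i=1}^{n} \Pref(\pi_i)$ sending a pullback parking function $\alpha = (a_1,\dots,a_m)$ to the tuple whose $i$-th coordinate (indexed by the parking spots) is the preference of the car that ultimately occupies spot $i$ (or $\star$ if spot $i$ is vacant). That $\Phi$ is well-defined is exactly the content of Lemmas \ref{lem:B} and \ref{lem:F}: whenever $\alpha$ has outcome $\pi$, the preference of car $\pi_i$ must lie in $\Pref(\pi_i)$ because its preference is either (Case 1) within $k$ spots to the right of $i$ among the contiguous block of previously occupied spots counted by $\R(\pi_i)$, (Case 2) within $\ell$ spots to the left of $i$ but at least $k$ spots into the contiguous block counted by $\Le(\pi_i)$, or (Case 3) equal to $i$ itself.

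Next I would construct the inverse. Given an arbitrary tuple $(p_1,\dots,p_n) \in \prod_{i=1}^{n} \Pref(\pi_i)$, define $\alpha$ by setting $a_{\pi_i} = p_i$ for each $i$ with $\pi_i>0$; the coordinates corresponding to $\pi_i=0$ contribute no data. The crucial claim is that running the $(k,\ell)$-pullback parking process on $\alpha$ produces outcome exactly $\pi$. I would verify this by induction on the entry time of the cars: the first car $\pi_{i_1}$ drives to its preference $p_{i_1}\in\Pref(\pi_{i_1})$ and, since the street is empty, the definition of $\Pref$ forces $p_{i_1}=i_1$, so it parks in spot $i_1$. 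For the inductive step, when car $j$ enters the street, the set of already-occupied spots matches precisely the set of spots $\{i : \pi_i \in [j-1]\}$, which is identical to the configuration implicit in the definition of $\Pref(\pi_{i_j})$; hence the parking process places car $j$ in spot $i_j$, as required. This construction is the inverse of $\Phi$, giving the bijection.

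The main obstacle in this plan is the inductive verification in the inverse construction, where one must argue carefully that the occupancy profile seen by car $j$ when it enters the street under preference list $\alpha$ exactly matches the profile assumed in the definitions of $\R(\pi_{i_j})$, $\Le(\pi_{i_j})$, and $\Pref(\pi_{i_j})$. The subtlety is that those quantities depend only on $\pi$ (the positions of \emph{earlier-labeled} cars relative to spot $i_j$), so one must observe that the induction hypothesis guarantees cars $1,\ldots,j-1$ occupy precisely the spots $i_1,\ldots,i_{j-1}$ regardless of what preferences were chosen for later cars. Once that is pinned down, the independence of the choices across coordinates is automatic and the product formula follows.
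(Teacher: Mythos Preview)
Your proposal is correct and follows essentially the same approach as the paper: both deduce the product formula from Corollary~\ref{cor:prefs for car i} together with the convention $|\Pref(0)|=1$. The paper's proof is a one-liner that simply asserts the product follows directly, whereas you spell out the underlying bijection and the inductive verification of independence that the paper leaves implicit; your version is more detailed but not a different method.
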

        
        \begin{proof}
            This follows directly from Corollary \ref{cor:prefs for car i} and by taking the product over all $i\in[n]$, where again we remark that if $\pi_i=0$, then $|\Pref(\pi_i=0)|=1$.
        \end{proof}
        
        We  now give our first formula to count $(k,\ell)$-pullback $(m,n)$-parking functions.
        
        \begin{theorem}\label{thm:counting through perms}
            Fix any nonnegative integers $k,\ell$ and positive integers $1\leq m\leq n$. The number of $(k,\ell)$-pullback $(m,n)$-parking functions is given by 
            \begin{align*}
                |\PF_{m,n}(k,\ell)|=
                \sum_{\pi\in \Sym_{m,n}} |\mathcal{O}^{-1}(\pi)|=
                \sum_{\pi=\pi_1\pi_2\cdots\pi_n\in \Sym_{m,n}}\left(\prod_{i=1}^n[\B(\pi_i)+\F(\pi_i)+1]\right).
            \end{align*}
            
        \end{theorem}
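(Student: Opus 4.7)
The plan is to recognize that Theorem \ref{thm:counting through perms} is essentially a bookkeeping statement built atop the work already done in Lemmas \ref{lem:B}, \ref{lem:F} and Lemma \ref{lem:parking functions for perm pi}. Every $(k,\ell)$-pullback $(m,n)$-parking function $\alpha$ produces, via the parking process, a unique outcome $\mathcal{O}(\alpha)\in \Sym_{m,n}$ in the sense of Definition \ref{def:set Smn}: the cars all park, so positions get labeled by the car parked there (or by $0$ if no car parks there). Therefore the map $\mathcal{O}\colon \PF_{m,n}(k,\ell)\to \Sym_{m,n}$ is well-defined, and the fibers $\mathcal{O}^{-1}(\pi)$ partition $\PF_{m,n}(k,\ell)$.

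Given this partition, the first step is to write
\[
|\PF_{m,n}(k,\ell)| \;=\; \sum_{\pi \in \Sym_{m,n}} |\mathcal{O}^{-1}(\pi)|,
\]
which is just a count-the-fibers identity. The second step is to replace each $|\mathcal{O}^{-1}(\pi)|$ by the closed form proved in Lemma \ref{lem:parking functions for perm pi}, namely
\[
|\mathcal{O}^{-1}(\pi)| \;=\; \prod_{i=1}^n [\B(\pi_i)+\F(\pi_i)+1].
\]
Substituting immediately yields the stated formula.

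The only subtlety worth flagging, and what I would expect a careful referee to look for, is that the fibers really do partition $\PF_{m,n}(k,\ell)$: one must be sure that distinct parking functions can produce the same outcome (so the product is indeed counting preferences, not outcomes), and that every permutation $\pi \in \Sym_{m,n}$ that is not actually realizable simply contributes $0$ to the sum because some factor $\B(\pi_i)+\F(\pi_i)+1$ reflects an empty set of preferences, or the outcome is realizable but contributes a genuine nonzero product. This is already handled implicitly in Lemmas \ref{lem:B} and \ref{lem:F}, where the formulas for $\B(\pi_i)$ and $\F(\pi_i)$ were derived precisely to enumerate the valid preferences producing that outcome. Consequently, the main obstacle is not any new calculation but simply ensuring that the decomposition over $\Sym_{m,n}$ is stated cleanly; once that is written down, the theorem is an immediate corollary of Lemma \ref{lem:parking functions for perm pi}.
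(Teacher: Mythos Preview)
Your proposal is correct and follows essentially the same approach as the paper: the paper's proof is the single sentence ``This follows directly from Lemma~\ref{lem:parking functions for perm pi} and by taking the sum over all $\Sym_{m,n}$,'' which is precisely the fiber-counting argument you describe. One small remark: your worry about unrealizable outcomes is unnecessary, since every $\pi\in\Sym_{m,n}$ is realized by the preference list in which each car prefers exactly the spot where $\pi$ places it, so no term in the sum is spurious.
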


        \begin{proof}
            This follows directly from Lemma \ref{lem:parking functions for perm pi} and by taking the sum over all $\Sym_{m,n}$.
        \end{proof}

\subsection{Applications of main theorem}
        We conclude this section by specializing parameters in Theorem \ref{thm:counting through perms} which give new formulas for $k$-Naples $(m,n)$-parking functions and vacillating $(m,n)$-parking functions. We note that neither of these formulas have appeared in the literature. 
        
        We can now specify $\ell=n-1$, in which case $\PF_{m,n}(k,n-1)=\PF_{m,n}(k)$, as the parameter $\ell=n-1$ does not restrict the forward motion of cars. With this in mind, we arrive at the following formula for the number of $k$-Naples $(m,n)$-parking functions.
        
        \begin{corollary}\label{cor:counting through perms}
Let $\widetilde{\B}$ and $\widetilde{\F}$ denote the specialization of $\F$ and $\B$ (from Definition \ref{def:cases_for_prefs}) with $n=\ell-1$, respectively.  Then
            the number of $k$-Naples parking functions with $m$ cars and $n$ spots such that $1\leq m\leq n$ is given by
            \[
            |\PF_{m,n}(k)|=\sum_{\pi=\pi_1\pi_2\cdots\pi_n\in \Sym_{m,n}}\left(\prod_{i=1}^n[\widetilde{\B}(\pi_i)+\widetilde{\F}(\pi_i)+1]\right).
            \]
        \end{corollary}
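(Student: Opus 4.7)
The plan is to observe that Corollary \ref{cor:counting through perms} is essentially an immediate specialization of Theorem \ref{thm:counting through perms}, and that the only real content is verifying that the $k$-Naples parking rule coincides with the $(k,\ell)$-pullback parking rule when $\ell = n-1$. So the first step I would take is to show the set equality $\PF_{m,n}(k) = \PF_{m,n}(k, n-1)$. For this, I would unpack both definitions and note that under the $(k,\ell)$-pullback rule with $\ell = n-1$, a car that fails to park in its preferred spot and in the $k$ spots behind it is permitted to check spots $a_i+1, a_i+2, \ldots, a_i+(n-1)$. Since $a_i \in [n]$, the window $a_i+1, \ldots, a_i+(n-1)$ contains every spot to the right of $a_i$ on the street, so the forward search is unrestricted. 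This matches exactly the $k$-Naples rule, so the two sets of parking functions coincide.

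Once the set equality is established, the second step is simply to invoke Theorem \ref{thm:counting through perms} with the specialization $\ell = n-1$. The definitions of $\B(\pi_i)$ and $\F(\pi_i)$ depend on $k$ and $\ell$, so when $\ell = n-1$ they become the functions $\widetilde{\B}(\pi_i)$ and $\widetilde{\F}(\pi_i)$ as defined in the statement. Plugging these into the product-sum formula of Theorem \ref{thm:counting through perms} yields exactly the right-hand side of the corollary.

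I do not anticipate any real obstacle, since the argument is a straightforward specialization. The only subtle point worth being careful about is the boundary behavior: when $\ell = n-1$, the truncation $\min(\Le(\pi_i) - k, \ell)$ in Lemma \ref{lem:F} is effectively never active (since $\Le(\pi_i) - k \le n - 1$ always), so $\widetilde{\F}(\pi_i)$ simplifies to $\max(\Le(\pi_i) - k, 0)$ in the interior case and to $i-1$ in the boundary case $\Le(\pi_i) = i-1$. I would mention this simplification for the reader's benefit but not belabor it, since Theorem \ref{thm:counting through perms} already handles all cases uniformly.
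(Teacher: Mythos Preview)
Your proposal is correct and matches the paper's approach exactly: the paper simply notes in the text preceding the corollary that setting $\ell=n-1$ gives $\PF_{m,n}(k,n-1)=\PF_{m,n}(k)$ because the parameter $\ell=n-1$ does not restrict forward motion, and then states the corollary as an immediate specialization of Theorem~\ref{thm:counting through perms}. Your extra remark about how $\widetilde{\F}$ simplifies is a nice addition but not required.
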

        
        Fang et al.~ in \cite{fang2024vacillatingparkingfunctions} defined vacillating parking functions (for $m=n$), which are preference lists in which cars check their preference, 
        the spot behind their preference (if it exists), and the spot ahead of their preference (if it exists) in this order and park in the first available spot they encounter among those options (if possible). 
        If any car is unable to park among those parking spots, then the list of preferences is not a vacillating parking function. 
        Let $\VPF_n$ denote the set of vacillating parking functions.
        A recursive formula for the number of vacillating parking functions was given in 
        \cite[Theorem 2.1]{fang2024vacillatingparkingfunctions}. 
        We extend this definition and let $\VPF_{m,n}$ denote the set of vacillating parking functions  with $m$ cars and $n$ spots such that $1\leq m\leq n$. 
        Setting $k=\ell=1$ in the formula of Theorem \ref{thm:counting through perms}, we provide a generalization to their result in the case where there are $m$ cars and $n$ spots, thereby establishing the following.
        
        \begin{corollary}
        Let $\widehat{\B}$ and $\widehat{\F}$ denote the specialization of $\F$ and $\B$ (from Definition \ref{def:cases_for_prefs}) with $k=\ell=1$, respectively.  Then
            the number of vacillating parking functions with $m$ cars and $n$ spots such that $1\leq m\leq n$ is given by
            \begin{align*}    
            |\VPF_{m,n}|=
            \sum_{\pi=\pi_1\pi_2\cdots\pi_n \in \Sym_{m,n}}\left(\prod_{i=1}^n[\widehat{\B}(\pi_i)+\widehat{\F}(\pi_i)+1]\right).
            \end{align*}
        \end{corollary}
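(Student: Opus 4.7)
My plan is to derive this corollary as an immediate specialization of Theorem \ref{thm:counting through perms} to the case $k=\ell=1$. The key observation is that the set of vacillating parking functions defined by Fang et al.~(and extended to $m\leq n$ in the paragraph preceding the corollary) coincides with the set $\PF_{m,n}(1,1)$ of $(1,1)$-pullback $(m,n)$-parking functions.

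First, I would verify the identification $\VPF_{m,n}=\PF_{m,n}(1,1)$. In a vacillating parking function, each car checks, in order, its preferred spot, the spot immediately behind, and the spot immediately ahead, parking in the first available spot among these three (if any). The $(1,1)$-pullback rule prescribes the same sequence: the car tries its preference, then, if occupied, tries the (at most) $k=1$ spot behind, and finally, if still unsuccessful, tries the (at most) $\ell=1$ spot ahead. Since both procedures inspect exactly the same positions in the same order with identical stopping conditions at the boundaries of the street, a preference list allows all cars to park under one rule if and only if it allows all cars to park under the other; hence $\VPF_{m,n}=\PF_{m,n}(1,1)$ as sets.

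With this equality in hand, I would apply Theorem \ref{thm:counting through perms} with $k=\ell=1$, which yields
\[|\VPF_{m,n}|=|\PF_{m,n}(1,1)|=\sum_{\pi=\pi_1\pi_2\cdots\pi_n\in \Sym_{m,n}}\left(\prod_{i=1}^n[\B(\pi_i)+\F(\pi_i)+1]\right),\]
where $\B$ and $\F$ are now evaluated under the parameter choice $k=\ell=1$. Renaming these specialized functions as $\widehat{\B}$ and $\widehat{\F}$ gives the formula in the statement.

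The only potential obstacle is confirming that Fang et al.'s definition indeed queries the three positions in the order \emph{preference, back, forward}, matching the $(1,1)$-pullback convention exactly; once this is checked against the original source, no further combinatorial content is required and the proof reduces to a one-line substitution into Theorem \ref{thm:counting through perms}.
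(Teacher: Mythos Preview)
Your proposal is correct and mirrors the paper's approach: the paper states this corollary follows by ``setting $k=\ell=1$ in the formula of Theorem~\ref{thm:counting through perms}'' after identifying vacillating parking functions with $(1,1)$-pullback parking functions. Your write-up simply makes the identification $\VPF_{m,n}=\PF_{m,n}(1,1)$ explicit and then substitutes, which is exactly what is intended.
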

              
    \section{Recursive formula}\label{sec:recursive formula}
        In this section, we provide a recursive formula for the number of $(k,\ell)$-pullback $(m,n)$-parking functions. 
        To make our approach precise we begin by adapting the definition of ``contained parking functions'' as first given in \cite{knaple}.

        \begin{definition}\label{def:contained}
         Let $a\leq b$, and consider a street with a spot $0$ added before spot $1$. 
            The set of \emph{contained $(k,\ell)$-pullback $(a,b)$-parking functions}, denoted $\C_{a,b}(k,\ell)$, is the subset of $(k,\ell)$-pullback $(a,b)$-parking functions such that the $a$ cars all park between spots 1 through $b$, and none of the cars back into spot $0$ while attempting to park.
        \end{definition}

        In Definition \ref{def:contained}, we use the word ``contained'', because if one were to introduce one or more available spots to the ends of the parking lot (before the first spot and after the $b$th spot), the $a$ cars would be ``contained'' only in spots $1,2,\ldots,b$. We also use $a$ and $b$ here instead of $m$ and $n$ because our contained pullback parking functions will focus only on a subset of size $a$ of the $a\leq m$ parked cars on a portion of the street of length $b$ when there are $b\leq n$ spots.
        
We illustrate the need for the set of contained parking functions via the following example.
\begin{example}
Consider the street of length $n=12$, with $m=7$ cars. Let $k=3$ and $\ell=2$, and consider the outcome $\pi=007103002654\in \mathfrak{S}_{7,12}$.

We want to count the number of $(3,2)$-pullback $(7,12)$-parking functions with outcome $\pi$. In order to do this, we consider the non-zero entries in $\pi$. For example, we need to consider how many pullback parking functions will have cars $2,6,5,4$ parked in spots $9,10,11,12$ on the street. 

This will not be the same as counting all parking functions in the set $\PF_{4,4}(3,2)$ and then simply adjusting the preferences. Consider the parking preference list $(1,4,4,1)$, which is a pullback parking function whose outcome is $\sigma = 1432$. 
However, when we adjust the preferences to be $(9,12,12,9)$ for cars $2,4,5,6$, we would arrive at the outcome $62054$ in spots $8$ through $12$. 

This means that we need to consider a subset of the parking functions in $\PF_{4,4}(3,2)$, and not necessarily the full set in order to solve our problem. 
\end{example}

Our main result in enumerating $(k,\ell)$-pullback $(a,b)$-parking functions will depend on the number of contained $(k,\ell)$-pullback $(m,n)$-parking functions. Thus we first give a formula for $|\C_{a,b}(k,\ell)|$. 

\subsection{Contained parking functions}\label{subsec:contained}
The goal is to count pullback parking functions with more spots than cars. However, one of the difficulties in counting these recursively is that we need to know the lengths
of the longest sub-intervals (consisting of adjacent parking spots) on the street which contain the cars. 
To this end we set the following notation.

\begin{notation}\label{special notation}
Given a permutation $\pi=\pi_1\pi_2\cdots\pi_n\in\Sym_{m,n}$, we let 
$S$ be the subset of $[n]$ of size $m$ in which the cars parked. Namely,
$S=\{u\in[n]:\pi_u>0\}$.
We then partition $S$ into maximal subintervals consisting of consecutive entries and we denote these subintervals by $S_1,S_2,\ldots, S_j$. Moreover, for each $1\leq i\leq j$, let $t_i$  be the cardinality of the set $S_i$. Then, for each $1\leq i\leq j$, we 
define $T_i\subseteq[m]$ of size $t_i$, to be the set of cars parking in subinterval $S_i$. Namely, for each $1\leq i\leq j$, we let
$T_i=\{\pi_u:u\in S_i\}$.
\end{notation}

\begin{example}
 Let $n=10, m=6$ and $\pi=0236001540\in \Sym_{6,10}$. Then $S=\{2,3,4,7,8,9\}$ and its partition into maximal subintervals consisting of consecutive entries are $S_1=\{2,3,4\}$, and $S_2=\{7,8,9\}$. Now $t_1=3, t_2=3$ with $T_1=\{2,3,6\}$ and $T_2=\{1,4,5\}$.
\end{example}

In what follows we will count the preferences of the cars in $T_i$ so that they park in the subinterval $S_i$. These preferences will form a contained parking function in which the number of spots and cars are both equal to the length of the subinterval. 
We will show that, for a subinterval $S_i$, the number of such preferences is equal to $|\C_{t_i,t_i}(k,\ell)|$.

To make this approach precise we  introduce parking outcomes on a subinterval of the street. We do this next.
            
\begin{definition}
Fix a subset $T\subseteq[m]$ of size $t$, and let $\Sym_T$ denote the set of permutations $\pi=\pi_1 \pi_2\cdots \pi_t$ of the set $T$. 
Define the following set of permutations 
\[\mathfrak{T}_{T}=\{\pi_0\pi_1\cdots\pi_t:\pi_0=0\mbox{ and } \pi_1\pi_2\cdots\pi_t\in \Sym_T \},\]   
which consists of all permutations in $\Sym_T$ to which we append a zero at the start.
\end{definition}
We remark that the placement of a zero at the start of the permutations $0\pi_1\pi_2\cdots\pi_t\in \Tym_{T}$, will ensure that from the set of contained pullback parking functions on a subinterval we can construct all pullback parking functions with outcome $\pi_1\pi_2\cdots\pi_n\in \Sym_{T}$.

In what follows, we specialize the results in \Cref{sec:counting through permutations} to contained pullback parking functions on a subinterval.
            
\begin{definition}\label{def:lots of stuff}
Following \Cref{special notation}, we let 
$S$ be an sub-interval of $[n]$ with length $t$, and $T$ be the corresponding set of cars parked in the sub-interval $S$.
Fix a permutation $\pi=\pi_0\pi_1\pi_2\cdots\pi_t\in\Tym_T$ and, for each $1\leq i\leq t$, let 
\begin{itemize}
    \item $\R(\pi_i)$ be the longest subsequence $\pi_{i+1},\pi_{i+2},\ldots,\pi_{i+x}$ where $\pi_{y}<\pi_i$ for all $i+1\leq y\leq i+x\leq t$; and 
    \item  $\Le(\pi_i)$ be the length of the longest consecutive subsequence, $\pi_y,\pi_{y+1},\ldots,\pi_{i-1}$, such that $0<\pi_z<\pi_i$ for all $1\leq y\leq z\leq i-1$.

\end{itemize}
Then, as in \Cref{def:cases_for_prefs}, let 
\begin{itemize}
\item $\B(\pi_i)$ be the number of preferences for car $\pi_i$ satisfying Case (\ref{item:case 1}), and
\item $\F(\pi_i)$ be the number of preferences for car $\pi_i$ satisfying Case (\ref{item:case 2}).
\end{itemize}   
\end{definition}

\begin{corollary}\label{cor: B contained}
If $S$, $T$, $t$, and $\pi\in\Tym_T$ are as in \Cref{def:lots of stuff}, then, for each $1\leq v\leq t$, we have that
$\B(\pi_v)=\min(\R(\pi_v),k)$, and 
        $\F(\pi_v)=\max(\min(\Le(\pi_i)-k),\ell),0)$.
\end{corollary}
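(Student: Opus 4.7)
The plan is to observe that the arguments of Lemmas \ref{lem:B} and \ref{lem:F} can be transported, almost verbatim, to the contained setting, with one subtle modification arising from the fact that the prepended entry $\pi_0=0$ represents an empty ``spot 0'' that cars are forbidden to back into (per Definition \ref{def:contained}). In the contained setting, the left boundary is no longer an impassable end-of-street but rather an available empty spot whose use by any car invalidates the configuration.

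For $\B(\pi_v)=\min(\R(\pi_v),k)$, I would apply the argument of Lemma \ref{lem:B} with no change. By Definition \ref{def:lots of stuff}, the quantity $\R(\pi_v)$ is the length of the maximal contiguous block of cars parked to the immediate right of position $v$ that arrived before car $\pi_v$. A preference $v+j$ with $1\le j\le\min(\R(\pi_v),k)$ causes car $\pi_v$ to find its spot occupied and back up through the occupied block $\{v+1,\ldots,v+j-1\}$ to land in spot $v$. Since all backward motion here proceeds entirely through positions to the right of $v$, the spot-$0$ restriction imposed by containment never comes into play, and the count $\min(\R(\pi_v),k)$ follows exactly as in Lemma \ref{lem:B}.

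For $\F(\pi_v)=\max(\min(\Le(\pi_v)-k,\ell),0)$, I would mirror the case analysis of Lemma \ref{lem:F}. When $\Le(\pi_v)<v-1$ the block of occupied spots to the left of $v$ is preceded by an empty position within the subinterval, and the reasoning of Cases (\ref{item: not all spots to the left}) and (\ref{item: not all spots to the left 2}) of Lemma \ref{lem:F} applies directly, yielding $\max(\min(\Le(\pi_v)-k,\ell),0)$. The case requiring genuine care is $\Le(\pi_v)=v-1$: in the original Lemma \ref{lem:F}, Case (\ref{item: all spots to the left}) returned $\min(v-1,\ell)$, because a car exhausting its backup on occupied spots $v-j-1,\ldots,1$ would then collide with the start of the street and be forced to move forward. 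In the contained setting, however, those very spots constitute a block that leads directly into the empty spot $0$, and a preference $v-j$ with $j\le k$ would cause car $\pi_v$ to back into spot $0$, violating the containment hypothesis. Valid preferences therefore satisfy $j\ge k+1$ (so that $v-j-k\ge 1$ and the backward window stays inside the occupied block), together with $j\le\ell$; the count becomes $\min(v-1-k,\ell)=\min(\Le(\pi_v)-k,\ell)$, truncated at $0$.

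The main obstacle is this last case: recognizing that the prepended spot $0$ is semantically an empty available spot rather than a street-end barrier, which eliminates exactly the $k$ extra preferences that Lemma \ref{lem:F} Case (\ref{item: all spots to the left}) would otherwise contribute. Once this is identified, the formulas for $\B(\pi_v)$ and $\F(\pi_v)$ reduce to the same expressions across all cases, giving the uniform statement of the corollary.
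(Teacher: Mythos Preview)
Your proposal is correct and, in fact, more careful than the paper's own argument. The paper's proof consists of a single sentence asserting that the result is a special case of Lemmas~\ref{lem:B} and~\ref{lem:F} ``where the number of spots and cars are equal,'' without further comment. You, by contrast, isolate the one genuine subtlety: the formula for $\F(\pi_v)$ in Corollary~\ref{cor: B contained} is \emph{not} literally a specialization of Lemma~\ref{lem:F}, because the case $\Le(\pi_v)=v-1$ of that lemma returns $\min(v-1,\ell)$, whereas the corollary gives the uniform expression $\max(\min(\Le(\pi_v)-k,\ell),0)$. Your explanation---that the prepended $\pi_0=0$ functions as an available empty spot rather than a street boundary, so that preferences $a_v\le k$ would back into spot~$0$ and violate containment, eliminating exactly $k$ preferences from the Lemma~\ref{lem:F} Case~(\ref{item: all spots to the left}) count---is precisely the content needed to justify the formula and is left implicit in the paper. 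Your treatment of $\B(\pi_v)$ and of the case $\Le(\pi_v)<v-1$ is also sound: the former never interacts with spot~$0$, and the latter already has an empty spot inside the subinterval that absorbs any backward motion before spot~$0$ is reached.
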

\begin{proof}
On each subinterval $S$ of length $t$, we are parking the cars in $T$, which consists of $t$ cars. Thus, result is a special case of \Cref{lem:B} and \Cref{lem:F}, where the number of spots and cars are equal. 
\end{proof}

\begin{corollary} \label{cor:prefs for car i contained}
If $S$, $T$, $t$, and $\pi\in\Tym_T$ are as in \Cref{def:lots of stuff}. For each $1\leq v\leq t$, let $\Pref(\pi_v)$ denote the set of preferences of car $\pi_v$ which cause car $\pi_v$ to park in spot $v$. Then 
\begin{align*}
    |\Pref(\pi_v)|=\B(\pi_v)+\F(\pi_v) + 1.
\end{align*}
\end{corollary}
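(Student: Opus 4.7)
The plan is to reduce this directly to \Cref{cor:prefs for car i} by observing that counting preferences on a subinterval is the same counting problem as on a full street, once the boundary conventions are correctly set. The first step is to note that the three cases of \Cref{def:cases_for_prefs}, namely backing into spot $v$ from the right, pulling forward into spot $v$ from the left, or parking directly at the preferred spot $v$, are mutually exclusive and collectively exhaustive, since the preference $a$ of car $\pi_v$ is a single integer satisfying exactly one of $a>v$, $a<v$, or $a=v$.

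Next I would invoke \Cref{cor: B contained}, which already establishes that on the subinterval the number of preferences of the first type equals $\B(\pi_v)$ and the number of the second type equals $\F(\pi_v)$. The third case contributes exactly one preference, namely $v$ itself, which is always admissible because by hypothesis $\pi_v$ parks in spot $v$. Summing the three counts yields
\[
|\Pref(\pi_v)| \;=\; \B(\pi_v) + \F(\pi_v) + 1,
\]
which is the claimed identity.

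The main subtle point, and what I expect to be the only real obstacle, is verifying that the boundaries of the subinterval do not introduce extra or missing preferences relative to the full-street situation of \Cref{cor:prefs for car i}. The convention $\pi_0=0$ in the definition of $\Tym_T$ correctly mimics the left end of the street: a car attempting to back into spot $v$ cannot back past position $0$, exactly as on the original street it cannot back past spot $1$. On the right, the maximality of $S$ as a block of consecutive occupied spots (\Cref{special notation}) guarantees that the spot immediately past $S$ is vacant, so no car from outside $T$ can be mistakenly counted as blocking a forward motion within $S$. Once these two compatibility observations are made, the corollary follows as a direct specialization of \Cref{cor:prefs for car i} to the sub-street of length $t$ with cars indexed by $T$.
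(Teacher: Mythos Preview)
Your core argument is correct and matches the paper's brief proof: invoke \Cref{cor: B contained} for $\B(\pi_v)$ and $\F(\pi_v)$, and add $1$ for the direct preference $a=v$. One minor correction to your boundary commentary (which the paper's proof omits entirely): the convention $\pi_0=0$ in $\Tym_T$ represents an \emph{empty} spot immediately to the left of the subinterval, not a wall, and this is exactly why \Cref{cor: B contained} always uses the branch $\F(\pi_v)=\max(\min(\Le(\pi_v)-k,\ell),0)$ from \Cref{lem:F} rather than the wall branch $\min(i-1,\ell)$; since you already rely on \Cref{cor: B contained}, this subtlety is absorbed and your argument stands.
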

\begin{proof}
This follows from \Cref{cor: B contained} and the definitions of  $\B(\pi_i)$ and $\F(\pi_i)$. The addition of $1$ accounts for car $i$ preferring and parking in spot $i$. 
\end{proof}

In what follows we will use the previous results to count pullback parking functions. \begin{remark}\label{remark on complicated notation}We remark that the process for counting these parking functions is as follows:
\begin{enumerate}
    \item Select a subset $S\subseteq[n]$ of size $m$, consisting of the indices for the spots in which the cars park.
    \item Partition $S$ into maximal subintervals consisting of consecutive entries and denote these subintervals by $S_1,S_2,\ldots, S_j$. 

    \item Let $t_1,t_2,\ldots,t_j$ denote the length of the subintervals $S_1,S_2,\ldots,S_j$, respectively.
    \item Partition the set $[m]$ into subsets $T_1,T_2,\ldots,T_j$ of sizes $t_1,t_2,\ldots,t_j$.
    \item For each $1\leq v\leq j$, the cars in $T_v$ will park in the subinterval $S_v$.
\item For each $1\leq v\leq j$, the set of permutations in $\Tym_{T_v}$ gives all possible parking outcomes of the cars in $T_v$ parking in the subinterval $S_v$.
\end{enumerate}
\end{remark}
Next we consider a single subinterval and count the number of contained parking functions whose outcome is a permutation $\pi\in\Tym_{T_v}$.
            
\begin{lemma}\label{lem:contained subinterval with a fixed outcome}
    Fix $1\leq v\leq j$, and let $S_v$, $T_v$, $t_v$, and $\pi=\pi_0\pi_1\pi_2\cdots\pi_{t_v}\in\Tym_{T_v}$ be as in 
\Cref{remark on complicated notation}. Let $|\mathcal{O}^{-1}(\pi)|$ denote the number of contained $(k,\ell)$-pullback $(t_v,t_v)$-parking functions whose outcome is the permutation $\pi$. Then        
    \begin{align*}
        |\mathcal{O}^{-1}(\pi)|=\left(\prod_{i=1}^{t_v} [\B(\pi_i)+\F(\pi_i)+1]\right).
    \end{align*}
\end{lemma}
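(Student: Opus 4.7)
The plan is to mirror the argument of Lemma \ref{lem:parking functions for perm pi} in the contained setting. The key point is that Corollary \ref{cor:prefs for car i contained} already provides the count $|\Pref(\pi_i)|=\B(\pi_i)+\F(\pi_i)+1$ of valid preferences for each individual car $\pi_i$, so all that remains is to show that the total count of contained parking functions with outcome $\pi$ factors as the product of these per-car counts.

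Concretely, I would construct a bijection between $\mathcal{O}^{-1}(\pi)$ and the Cartesian product $\prod_{i=1}^{t_v}\Pref(\pi_i)$. The forward map sends a contained parking function $\alpha$ with $\mathcal{O}(\alpha)=\pi$ to the tuple whose $i$th coordinate is the preference of car $\pi_i$; this preference lies in $\Pref(\pi_i)$ by definition. The inverse map takes a tuple of such preferences and declares them to be a preference list. One then verifies by induction on $i$ that car $\pi_i$ parks in spot $i$: the inductive hypothesis is that cars $\pi_1,\pi_2,\ldots,\pi_{i-1}$ have already parked in the spots prescribed by $\pi$, and the inductive step follows because the preference chosen for car $\pi_i$ falls into exactly one of the three cases (\ref{item:case 1}), (\ref{item:case 2}), or (\ref{item:case 3}) analyzed in the proofs of Lemmas \ref{lem:B} and \ref{lem:F}, each of which sends car $\pi_i$ to spot $i$ under the $(k,\ell)$-pullback parking rule. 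Combining the bijection with Corollary \ref{cor:prefs for car i contained} yields the stated product formula.

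The main obstacle is verifying that the quantities $\R(\pi_i)$ and $\Le(\pi_i)$, as now defined on $\pi\in\Tym_{T_v}$ rather than on $\Sym_{m,n}$, still faithfully record the street state when car $\pi_i$ arrives within the subinterval $S_v$. The leading zero $\pi_0=0$ built into $\Tym_{T_v}$ is decisive: it caps $\Le(\pi_i)$ at the left boundary of the subinterval, which corresponds precisely to preventing any car from backing past spot $0$ of the subinterval, exactly as required by Definition \ref{def:contained}. Once this boundary bookkeeping is confirmed, the argument is otherwise a direct specialization of the proof of Lemma \ref{lem:parking functions for perm pi} to the contained case where the number of cars equals the length of the subinterval.
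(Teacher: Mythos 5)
Your proposal is correct and takes essentially the same route as the paper, which simply cites Corollary \ref{cor:prefs for car i contained} and takes the product over $1\leq i\leq t_v$; your bijection with $\prod_{i=1}^{t_v}\Pref(\pi_i)$ and the boundary remark about $\pi_0=0$ just make explicit what the paper leaves implicit. One small fix: the induction verifying the inverse map should run over the cars in their order of arrival (increasing labels in $T_v$), not over the spot index $i$, since the cars $\pi_1,\ldots,\pi_{i-1}$ occupying spots to the left of spot $i$ need not all have arrived before car $\pi_i$.
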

            
\begin{proof}
This follows directly from Corollary \ref{cor:prefs for car i contained} and taking the product over all $1\leq i\leq t_v$.
\end{proof}

Next we consider a single subinterval and count the number of contained parking functions over all possible outcomes i.e., over all permutations $\pi\in\Tym_{T_v}$.

\begin{theorem}\label{thm:pb contained}
Fix $1\leq v\leq j$, and let $S_v$, $T_v$, $t_v$, and $\pi=\pi_0\pi_1\pi_2\cdots\pi_{t_v}\in\Tym_{T_v}$ be as in 
\Cref{remark on complicated notation}. Let $|\mathcal{O}^{-1}(\pi)|$ denote the number of contained $(k,\ell)$-pullback $(t_v,t_v)$-parking functions whose outcome is the permutation $\pi$. Then the number of contained $(k,\ell)$-pullback $(t_v,t_v)$-parking functions is given by 
\begin{align*}
    |\C_{t_v,t_v}(k,\ell)|=\sum_{\pi\in \Tym_{T_v}} |\mathcal{O}^{-1}(\pi)|=\sum_{\pi\in \Tym_{T_v}}\left(\prod_{i=1}^{t_v}[\B(\pi_i)+\F(\pi_i)+1]\right).
\end{align*}
\end{theorem}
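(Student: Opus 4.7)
The plan is to prove Theorem \ref{thm:pb contained} by partitioning the set $\C_{t_v,t_v}(k,\ell)$ of contained pullback parking functions according to the outcome permutation they produce, and then applying Lemma \ref{lem:contained subinterval with a fixed outcome} to count each fiber of this partition. The overall strategy mirrors the passage from Lemma \ref{lem:parking functions for perm pi} to Theorem \ref{thm:counting through perms} in Section \ref{sec:counting through permutations}, specialized to the contained setting where the number of cars equals the number of spots in the subinterval.

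First, I would argue that the outcome map $\out \colon \C_{t_v,t_v}(k,\ell) \to \Tym_{T_v}$ is well-defined and has image in $\Tym_{T_v}$. Every contained pullback parking function parks all $t_v$ cars in $T_v$ within spots $1, 2, \ldots, t_v$ (by definition of contained) and none of them back into spot $0$. Hence the outcome is a permutation of $T_v$ in positions $1$ through $t_v$, and prepending a zero in position $0$ (representing the empty spot at the added position) gives an element of $\Tym_{T_v}$. Conversely, every $\pi \in \Tym_{T_v}$ arises as the outcome of at least one contained parking function, and I would note that this is precisely what Lemma \ref{lem:contained subinterval with a fixed outcome} enumerates.

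Next, since each $\alpha \in \C_{t_v,t_v}(k,\ell)$ has a unique outcome, the sets $\out^{-1}(\pi)$ as $\pi$ ranges over $\Tym_{T_v}$ partition $\C_{t_v,t_v}(k,\ell)$. Therefore
\begin{align*}
|\C_{t_v,t_v}(k,\ell)| = \sum_{\pi\in \Tym_{T_v}} |\out^{-1}(\pi)|,
\end{align*}
and substituting the product formula from Lemma \ref{lem:contained subinterval with a fixed outcome} for $|\out^{-1}(\pi)|$ gives the claimed identity. Note that the index $i = 0$ does not appear in the product because $\pi_0 = 0$ represents no actual car and contributes no preferences.

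The only subtle point, which I would want to flag as the main item requiring care rather than a true obstacle, is confirming that allowing $\pi_0 = 0$ (the added dummy spot $0$) does not overcount or affect the preferences of car $\pi_1$ in positions $1,\ldots,t_v$. Specifically, one must check that $\Le(\pi_1) = 0$ under \Cref{def:lots of stuff}, so that Lemma \ref{lem:F} correctly gives $\F(\pi_1) = 0$; this matches the contained condition that no car backs into spot $0$. Once this consistency is verified, the theorem follows immediately from Lemma \ref{lem:contained subinterval with a fixed outcome} by summing over $\Tym_{T_v}$.
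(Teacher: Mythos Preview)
Your proposal is correct and follows exactly the same approach as the paper: the paper's proof is a single sentence stating that the result follows from \Cref{lem:contained subinterval with a fixed outcome} by summing over all outcomes in $\Tym_{T_v}$. You have simply fleshed out that sentence with the justification that the outcome map partitions $\C_{t_v,t_v}(k,\ell)$ and a consistency check on $\Le(\pi_1)=0$, which is more detail than the paper provides but entirely in line with its argument.
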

\begin{proof}
    This follows from \Cref{lem:contained subinterval with a fixed outcome} by taking a sum over all possible parking outcomes in the set $\Tym_{T_v}$.
\end{proof}

In the following result, the specialization of $\ell=n-1$, means the result holds for $k$-Naples $(m,n)$-parking functions.
\begin{corollary}\label{coro:kNaples_contained_count}
    Fix $1\leq v\leq j$, and let $S_v$, $T_v$, $t_v$, and $\pi=\pi_0\pi_1\pi_2\cdots\pi_{t_v}\in\Tym_{T_v}$ be as in 
\Cref{remark on complicated notation}.
                Let $|\mathcal{O}^{-1}(\pi)|$ denote the number of 
                contained $(k,n-1)$-pullback $(t_v,t_v)$-parking functions whose outcome is the permutation $\pi$. That is, the number of contained $k$-Naples $(t_v,t_v)$-parking functions. 
                Then the number of contained contained $k$-Naples $(t_v,t_v)$-parking functions is given by 
                \begin{align*}
                    |\C_{t_v,t_v}(k,n-1)|=
                    \sum_{\pi\in \Tym_{T_v}} |\mathcal{O}^{-1}(\pi)|=
                    \sum_{\pi\in \Tym_{T_v}}\left(\prod_{i=1}^{t_v}[\B(\pi_i)+\F(\pi_i)+1]\right).
                \end{align*}
\end{corollary}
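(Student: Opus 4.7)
The plan is to apply \Cref{thm:pb contained} after verifying that setting $\ell = n-1$ in the $(k,\ell)$-pullback parking rule recovers the $k$-Naples parking rule. Recall that in the $k$-Naples rule, once a car has failed to back up at most $k$ spots from its preference, it may proceed forward and check every remaining spot on the street. In the $(k,\ell)$-pullback rule with $\ell = n-1$, a car may check up to $n-1$ spots ahead of its preference, which is enough to reach any spot on a street of length $n$ regardless of its preferred starting position. Hence $\PF_{m,n}(k, n-1) = \PF_{m,n}(k)$, and correspondingly $\C_{t_v,t_v}(k,n-1)$ is exactly the set of contained $k$-Naples $(t_v,t_v)$-parking functions, justifying the ``That is'' clause in the corollary's statement.

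Given this identification, the displayed formula follows by directly invoking \Cref{thm:pb contained} with the parameter $\ell = n-1$. The counting functions $\B(\pi_i)$ and $\F(\pi_i)$ from \Cref{def:lots of stuff} are unchanged in form; only the value of $\ell$ in the formula $\F(\pi_i) = \max(\min(\Le(\pi_i)-k, \ell), 0)$ is specialized. Since we work inside a subinterval of length $t_v$, we have $\Le(\pi_i) \leq t_v - 1 \leq n-1$, so that the ``$\ell$'' term never binds and $\F(\pi_i)$ reduces to $\max(\Le(\pi_i) - k, 0)$, matching the behavior of the $k$-Naples rule on the contained subinterval. Thus both the sum over $\Tym_{T_v}$ and the product over $1 \leq i \leq t_v$ transfer verbatim from \Cref{thm:pb contained}.

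There is no substantive obstacle here, as this is a direct specialization of the preceding theorem. The only step that merits care is verifying that the contained $(k, n-1)$-pullback parking rule and the contained $k$-Naples rule coincide on a subinterval of length $t_v$; this holds precisely because $n-1 \geq t_v - 1$ bounds every possible forward motion of a car parking within the subinterval, so the upper bound $\ell = n-1$ never excludes a preference that the $k$-Naples rule would have allowed.
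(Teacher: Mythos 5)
Your proposal is correct and matches the paper's (implicit) argument: the corollary is stated without proof precisely because it is the direct specialization $\ell = n-1$ of Theorem~\ref{thm:pb contained}, which is exactly what you carry out. Your added check that $\Le(\pi_i) \leq t_v - 1 \leq n-1$ makes the $\ell$-bound vacuous inside the subinterval is a correct and worthwhile verification, but it does not change the route.
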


\subsection{Recursions} 

In this section we give a recursion for $(k,\ell)$-pullback $(m,n)$-parking functions. Throughout whenever $a>b$, $|\PF_{a,b}|=0$ as there are more cars than spots.
We begin with the following definition which  plays a key role in our proofs.
        \begin{definition}\label{def:popular region}
        
            The \emph{popular region} is the longest contiguous set of cars parked immediately to one side of spot $i$. We let $R$ denote  the number of cars parked in the popular region. The specific side of spot $i$  which contains the popular region is noted whenever it is used.
        \end{definition}
        
        We  now give a recursive formula for the number of pullback $(m,n)$-parking functions.
        
        \begin{theorem}\label{thm:pullback}
        
            Let $m$ and $n$ be positive integers with $m\leq n$, and fix $0\leq k \leq n-1$ and $0\leq \ell \leq n-1$. The number of $(k,\ell)$-pullback $(m,n)$-parking functions satisfies the following recursive formula 
            \begin{align*}
                |\PF_{m,n}(k,\ell)|=
                &\sum_{i=1}^n\left[X(i)+
                Y(i)+
                \sum_{x=0}^{m-1}\left(Z(i,x)+\sum_{R=1}^{n-i-1}V(i,x,R)+
                \sum_{R=k+1}^{i-2}W(i,x,R)\right)\right],
            \end{align*}
            where\\
                $X(i)=\binom{m-1}{n-i}
                |\PF_{m-1-n+i, i-1}(k,\ell)||\C_{n-i,n-i}(k,\ell)| \min(k,n-i)$,\\
                $Y(i)= \binom{m-1}{i-1}|\PF_{i-1,i-1}(k,\ell)| |\C_{m-i,n-i}(k,\ell)|  \min(i-1,\ell)$,\\
                $Z(i,x)= \binom{m-1}{x}|\PF_{x,i-1}(k,\ell)|  |\C_{m-1-x,n-i}(k,\ell)|$,\\
            $V(i,x,R)=\binom{m-1}{x}|\PF_{x,i-1}(k,\ell)|\binom{m-1-x}{R}|\C_{R,R}(k,\ell)||\C_{m-1-x-R,n-R-i-1}(k,\ell)| \min(R,k)$, and $
                W(i,x,R)=\binom{m-1}{x} |\PF_{x,i-R-2}(k,\ell)|\binom{m-1-x}{R}|\C_{R,R}(k,\ell)| |\C_{m-1-x-R,n-i}(k,\ell)| \min(R-k,\ell)$.
        \end{theorem}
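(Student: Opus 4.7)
The plan is to condition on the spot $i \in [n]$ where the last car, car $m$, parks in the final configuration. Because car $m$ enters the street last, spot $i$ must be empty on its arrival, and the other $m-1$ cars have already parked in $m-1$ spots distinct from $i$. For each $i$, the popular region of size $R$ immediately adjacent to spot $i$ (in the final configuration, equivalently the state when car $m$ arrives) governs both the number of viable preferences for car $m$ and the boundary conditions imposed on the neighboring parked regions.

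There are three exhaustive and mutually exclusive possibilities for car $m$'s preference $a_m$. If $a_m = i$, car $m$ parks directly, yielding the $Z$ terms. If $a_m > i$, car $m$ backs into $i$, which requires $a_m - i \le k$ and that spots $\{i+1,\ldots,a_m\}$ all be occupied; this yields the $V$ terms when the popular region on the right has size $R < n-i$ (leaving an empty gap at $i+R+1$) and the $X$ term when $R = n-i$. If $a_m < i$, car $m$'s backward search must fail before its forward search reaches $i$, which requires $i-a_m \le \ell$ together with a sufficiently long occupied block immediately to the left of $i$; this yields the $W$ terms when the popular region has size $R < i-1$ and the $Y$ term when $R = i-1$. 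In each case the number of choices for $a_m$ matches the $\min(\cdot,\cdot)$ factor appearing in the corresponding term, by the same local analysis as in Lemmas~\ref{lem:B} and~\ref{lem:F}.

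For each case I will distribute the $m-1$ other cars among the maximal occupied regions using binomial coefficients, and then count the arrangements in each region. The key structural claim is that the regions decouple under suitable boundary conditions: a region whose boundary faces the true street end (spot $1$ or spot $n$) or a gap (an empty spot that must remain empty) contributes $|\PF_{a,b}(k,\ell)|$, while a region whose boundary faces an occupied spot (the focal spot $i$ itself, or an endpoint of the popular region nearest $i$) contributes $|\C_{a,b}(k,\ell)|$. The contained factor is needed precisely because any car whose backward search reaches the phantom spot~$0$ would, in the full street, instead park in the empty focal spot $i$ before car $m$ arrives, or else in the empty gap, in either case contradicting our conditioning on the final configuration.

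The main obstacle is justifying this decoupling rigorously. For the forward direction (a global parking function determines regional data) one must verify that pullback moves, being bounded by $k$ backward and $\ell$ forward, cannot propagate across a maximal occupied popular region to affect the dynamics in a neighboring region, so that the projections of the parking function onto each region are themselves (contained) pullback parking functions. For the reverse direction one must show that any compatible tuple of regional parking functions assembles into a global pullback parking function in which car $m$ indeed parks in spot $i$ via one of the prescribed $\min(\cdot,\cdot)$ preferences. Once this bijection is established, summing over $i$, $x$, and $R$ yields the recursive identity, with the boundary terms $X$ and $Y$ appearing separately because the popular region absorbs an entire side of the street, leaving no gap and no further region to enumerate.
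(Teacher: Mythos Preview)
Your approach is the paper's: condition on the spot $i$ where car $m$ parks, split into the three cases $a_m=i$, $a_m>i$, $a_m<i$, subdivide the latter two by whether the popular region runs to the street boundary, distribute the other $m-1$ cars by binomials, and count car $m$'s preferences via the $\min$ factor.

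However, your stated criterion for when a region contributes $|\PF|$ versus $|\C|$ is garbled and does not reproduce the formula. You say a region whose boundary faces spot $n$ or a gap contributes $|\PF|$ and a region facing ``an occupied spot (the focal spot $i$ itself \ldots)'' contributes $|\C|$. But spot $i$ is \emph{empty} while the first $m-1$ cars park, and in Subcase~2b the far-right region has the empty gap $i+R+1$ on its left yet uses $|\C|$, not $|\PF|$. The correct rule is one-sided and concerns only the \emph{left} boundary: a region whose left edge is spot $1$ uses $|\PF|$ (backing past spot $1$ is already handled by the ordinary pullback rule), whereas any region whose immediate left neighbor is a spot that must remain empty---spot $i$, or the gap at $i+R+1$ in Case~2b, or the gap at $i-R-1$ in Case~3b---uses $|\C|$, precisely to forbid backing into that spot. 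The right boundary is irrelevant because forward motion past spot $n$ simply exits the street. Checking this against all five terms $X,Y,Z,V,W$ confirms the pattern: the leftmost region always carries a $|\PF|$ factor and every region to its right carries a $|\C|$ factor. Correct this statement and the rest of your sketch goes through as written.
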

        
        \begin{proof}
        Let $\alpha = (a_1, a_2, \ldots , a_m)\in \PF_{m,n}(k,\ell)$.
            We count the number of $(k,\ell)$-pullback $(m,n)$-parking functions by partitioning the set based on the parking location of the final car relative to its preference. This gives rise to the following three cases:
            
            \begin{enumerate}[leftmargin=.7in]
                
                \item[\textbf{Case 1:}] Car $m$ prefers spot $i$ and parks there ($a_m = i$),
                
                \item[\textbf{Case 2:}] Car $m$ prefers some spot to the right of spot $i$ and backs into spot $i$ ($a_m > i$), or 
                
                \item[\textbf{Case 3:}] Car $m$ prefers a spot to the left of spot $i$ and it pulls forward into spot $i$ ($a_m < i$).
                
            \end{enumerate}
            
            We now count the preferences in each of these cases independently. 
            \smallskip
            
            \noindent{\textbf{Case 1:}}
            In this case, car $m$ prefers spot $i$ and parks there. We illustrate this case in Figure~\ref{fig:case1}.
            
            \begin{figure}[ht]
\centering\tikzset{every picture/.style={line width=0.75pt}} 
\begin{tikzpicture}[x=0.75pt,y=0.75pt,yscale=-1,xscale=1]

\draw [fill={rgb, 255:red, 0; green, 0; blue, 0 }  ,fill opacity=1 ][line width=2.25]    (84,80.6) -- (510,80.6) ;
\draw    (84,71) -- (84,90.6) ;
\draw    (510,71) -- (510,90.6) ;
\draw  [fill={rgb, 255:red, 255; green, 255; blue, 255 }  ,fill opacity=1 ] (297,80.6) .. controls (297,77.95) and (299.15,75.8) .. (301.8,75.8) .. controls (304.45,75.8) and (306.6,77.95) .. (306.6,80.6) .. controls (306.6,83.25) and (304.45,85.4) .. (301.8,85.4) .. controls (299.15,85.4) and (297,83.25) .. (297,80.6) -- cycle ;
\draw   (296,81) .. controls (296,76.33) and (293.67,74) .. (289,74) -- (197.82,74) .. controls (191.15,74) and (187.82,71.67) .. (187.82,67) .. controls (187.82,71.67) and (184.49,74) .. (177.82,74)(180.82,74) -- (92,74) .. controls (87.33,74) and (85,76.33) .. (85,81) ;
\draw   (510,79) .. controls (510,74.33) and (507.67,72) .. (503,72) -- (416.45,72) .. controls (409.78,72) and (406.45,69.67) .. (406.45,65) .. controls (406.45,69.67) and (403.12,72) .. (396.45,72)(399.45,72) -- (315,72) .. controls (310.33,72) and (308,74.33) .. (308,79) ;
\draw   (85,82) .. controls (85,86.67) and (87.33,89) .. (92,89) -- (179,89) .. controls (185.67,89) and (189,91.33) .. (189,96) .. controls (189,91.33) and (192.33,89) .. (199,89)(196,89) -- (289,89) .. controls (293.67,89) and (296,86.67) .. (296,82) ;
\draw   (308,82) .. controls (308,86.67) and (310.33,89) .. (315,89) -- (396.49,89) .. controls (403.16,89) and (406.49,91.33) .. (406.49,96) .. controls (406.49,91.33) and (409.82,89) .. (416.49,89)(413.49,89) -- (503,89) .. controls (507.67,89) and (510,86.67) .. (510,82) ;

\draw (196,47) node [anchor=north][inner sep=0.75pt]   [align=left] {{$i-1$ \quad \fontfamily{ptm}\selectfont spaces}};
\draw (411,47) node [anchor=north][inner sep=0.75pt]   [align=left] {{$n-i$ \quad \fontfamily{ptm}\selectfont spaces}};
\draw (189,98) node [anchor=north][inner sep=0.75pt]   [align=left] {{$x$ \quad \fontfamily{ptm}\selectfont cars}};
\draw (430,98) node [anchor=north][inner sep=0.75pt]   [align=left] {{$m-1-x$ \quad \fontfamily{ptm}\selectfont cars \quad}};
\draw (80,96.4) node [anchor=north west][inner sep=0.75pt]    {$1$};
\draw (299,96.4) node [anchor=north west][inner sep=0.75pt]    {$i$};
\draw (504,96.4) node [anchor=north west][inner sep=0.75pt]    {$n$};
\end{tikzpicture}
\vspace{-.35in}
                \caption{
                In the figure, spot $i$ is left open and the region to the left of spot $i$ consists of spots $1$ through $i-1$, and there are $x$ cars parked in those spots.
                Hence, $x\leq i-1$ and $x \leq m-1$. 
                The region to the right of spot $i$ consists of the spots $i+1$ through $n$, and $m-1-x$ cars are parked in that region. Hence, $m-1-x \leq n-i$.}
                \label{fig:case1}
                
            \end{figure}
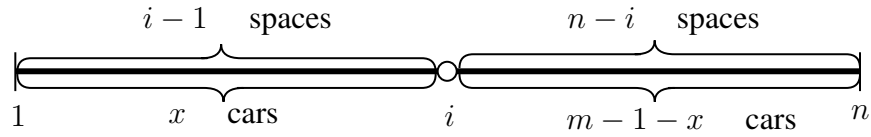
            
            Suppose that $x$ cars park to the left of spot $i$ such that $0\leq x \leq m-1$. 
            There are $\binom{m-1}{x}$ way to select those cars among $m-1$ cars. 
            We then park those cars in the $i-1$ spots to the left of spot $i$, which can be done $|\PF_{x,i-1}(k,\ell)|$ ways. Note that this returns $0$ if $x>i-1$.  
            
            We now consider the cars parked to the right of spot $i$. The cars parking in this region are uniquely determined, as the complement of the set of cars parking in the spots $1$ through $i-1$. Because we are parking these cars to the right of spot $i$, in order to leave spot $i$ vacant, we must only include parking functions which do not back into spot $i$ using the pullback parking rule. 
            This is precisely the set of contained pullback parking functions, $\C_{m-1-x,n-i}(k,\ell)$.  
            Hence, we have a factor of $|\C_{m-1-x,n-i}(k,\ell)|$ and this value is zero whenever $m-1-x > n-i$.
            
            The only car left to park is car $m$. 
            Given the assumption of this case, car $m$ prefers spot $i$, and parks there, accounting for a factor of $1$ in the count.
            To conclude we must sum over all possible values of $x$ and~$i$. Hence, the total count contributed by Case 1 is given by
    \begin{align}\label{case1}
        \sum_{i=1}^n\sum_{x=0}^{m-1}Z(i,x)\coloneqq\sum_{i=1}^n\sum_{x=0}^{m-1} \binom{m-1}{x} \cdot |\PF_{x,i-1}(k,\ell)| \cdot |\C_{m-1-x,n-i}(k,\ell)|
    .
            \end{align}
            
            \noindent{\textbf{Case 2:}}
            In this case, car $m$ prefers some spot to the right of spot $i$ and backs into spot $i$.
            
            In this case we set the popular region, as defined in Definition \ref{def:popular region}, to be located to the right of spot $i$. Recall that we let $R$ denote the length of the popular region.
            
            We now consider the following two subcases:
            \begin{enumerate}[leftmargin=1in]
                
                \item[\textbf{Subcase 2a:}] The popular region extends all the way to the end of the street ($R=n-i$). 
                
                \item[\textbf{Subcase 2b:}] The popular region does not extend to the end of the street ($R<n-i$).
                
            \end{enumerate}
            
            We now count the preferences for the cars in each of these subcases.
            
            \noindent \textbf{Subcase 2a:} The popular region extends all the way to the end of the street, so $R=n-i$. We illustrate this case in Figure \ref{fig:case2a}.
            
            \begin{figure}[ht]
            
                \centering\tikzset{every picture/.style={line width=0.75pt}} 
\begin{tikzpicture}[x=0.75pt,y=0.75pt,yscale=-1,xscale=1]

\draw [fill={rgb, 255:red, 0; green, 0; blue, 0 }  ,fill opacity=1 ][line width=2.25]    (104,100.6) -- (530,100.6) ;
\draw    (104,91) -- (104,110.6) ;
\draw    (530,91) -- (530,110.6) ;
\draw  [fill={rgb, 255:red, 255; green, 255; blue, 255 }  ,fill opacity=1 ] (317,100.6) .. controls (317,97.95) and (319.15,95.8) .. (321.8,95.8) .. controls (324.45,95.8) and (326.6,97.95) .. (326.6,100.6) .. controls (326.6,103.25) and (324.45,105.4) .. (321.8,105.4) .. controls (319.15,105.4) and (317,103.25) .. (317,100.6) -- cycle ;
\draw   (316,100) .. controls (316,95.33) and (313.67,93) .. (309,93) -- (217.82,93) .. controls (211.15,93) and (207.82,90.67) .. (207.82,86) .. controls (207.82,90.67) and (204.49,93) .. (197.82,93)(200.82,93) -- (112,93) .. controls (107.33,93) and (105,95.33) .. (105,100) ;
\draw   (529,99) .. controls (529,94.33) and (526.67,92) .. (522,92) -- (435.97,92) .. controls (429.3,92) and (425.97,89.67) .. (425.97,85) .. controls (425.97,89.67) and (422.64,92) .. (415.97,92)(418.97,92) -- (335,92) .. controls (330.33,92) and (328,94.33) .. (328,99) ;
\draw   (105,102) .. controls (105,106.67) and (107.33,109) .. (112,109) -- (199,109) .. controls (205.67,109) and (209,111.33) .. (209,116) .. controls (209,111.33) and (212.33,109) .. (219,109)(216,109) -- (309,109) .. controls (313.67,109) and (316,106.67) .. (316,102) ;
\draw   (328,102) .. controls (328,106.67) and (330.33,109) .. (335,109) -- (416,109) .. controls (422.67,109) and (426,111.33) .. (426,116) .. controls (426,111.33) and (429.33,109) .. (436,109)(433,109) -- (522,109) .. controls (526.67,109) and (529,106.67) .. (529,102) ;
\draw [color={rgb, 255:red, 208; green, 2; blue, 27 }  ,draw opacity=1 ][line width=4.5]    (327,100.6) -- (530,100.6) ;

\draw (171,66.4) node [anchor=north west][inner sep=0.75pt]    {$i-1$ \quad \fontfamily{ptm}\selectfont spaces};
\draw (361,66.4) node [anchor=north west][inner sep=0.75pt]    {$R=n-i$ \quad \fontfamily{ptm}\selectfont spaces};
\draw (134,116.4) node [anchor=north west][inner sep=0.75pt]    {$m-1-( n-i)$ \quad \fontfamily{ptm}\selectfont cars};
\draw (367,116.4) node [anchor=north west][inner sep=0.75pt]    {$R=n-i$ \quad \fontfamily{ptm}\selectfont cars};
\draw (99,115.4) node [anchor=north west][inner sep=0.75pt]    {$1$};
\draw (319,116.4) node [anchor=north west][inner sep=0.75pt]    {$i$};
\draw (525,116.4) node [anchor=north west][inner sep=0.75pt]    {$n$};
\end{tikzpicture}
                \vspace{-.35in}
                \caption{In the figure, spot $i$ is vacant. 
                The region to the right of spot $i$ consists of the spots numbered $i+1$ through $n$. In this case, the popular region  is highlighted in red. Here, the popular region satisfies $R=n-i$. 
                The region to the left of spot $i$ consists of the spots numbered $1$ through $i-1$, and the remaining $m-1-(n-i)$ cars are parked there.}
                
                \label{fig:case2a}
                
            \end{figure}
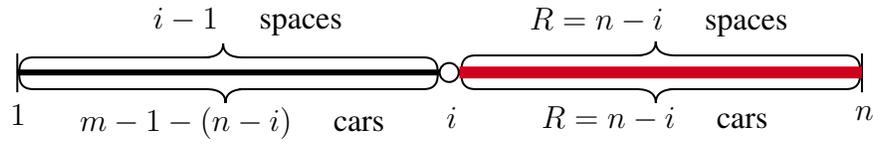
            
            Given the constraints in Case 2a, we first choose the $n-i$ cars to park to the right of spot $i$. This can be done in $\binom{m-1}{n-i}$ ways.
            The complementary set consisting of $m-1-(n-i)$ cars are then parked to the left of spot $i$, which can park in
            $|\PF_{m-1-n+i,i-1}(k,\ell)|$ ways.
            
            For the cars parking in spots $i+1$ through $n$, we park the $R=n-i$ cars using the contained pullback parking function in order to not back up into spot $i$. This can be done in $|\C_{R,R}(k,\ell)|=|\C_{n-i,n-i}(k,\ell)|$ many ways.
            
            Now, we consider the number of preferences that car $m$ can have, which allow it to park in spot $i$. 
            In some cases, this will be $k$, as car $m$ can prefer at most $k$ spots ahead of spot $i$ and still be able to back into spot $i$; however, we must account for the case when $i+k>n$. In that case, the number of possible preferences for car $m$ is simply all available spots in the popular region, namely $R=n-i$. Thus, the number of preferences for car $m$ is counted by $\min(k,n-i)$.
            
            To conclude, we must sum over all possible values of $i$ such that $1\leq i\leq n$.
            Hence, the total count contributed by Subcase~2a is given by
\begin{align}\label{case2a}
                \sum_{i=1}^n X(i)\coloneqq\sum_{i=1}^n \binom{m-1}{n-i}|\PF_{m-1-n+i, i-1}(k,\ell)|\cdot|\C_{n-i,n-i}(k,\ell)|\cdot \min(k,n-i).
            \end{align}
            
            \noindent \textbf{Subcase 2b:} Not all of the spaces to the right of spot $i$ are occupied, so the popular region does not extend to the end of the street ($R<n-i$). We illustrate this case in Figure \ref{fig:case2b}.
            
            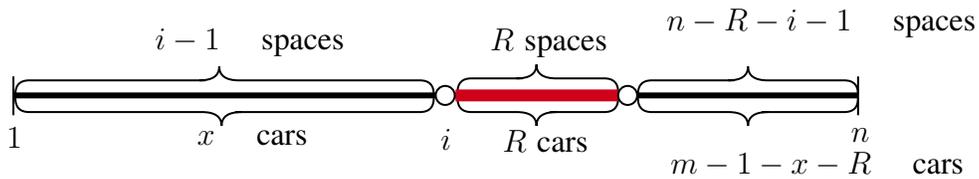
\begin{figure}[ht]
            
                \centering\tikzset{every picture/.style={line width=0.75pt}} 

\begin{tikzpicture}[x=0.75pt,y=0.75pt,yscale=-1,xscale=1]

\draw [fill={rgb, 255:red, 0; green, 0; blue, 0 }  ,fill opacity=1 ][line width=2.25]    (104,100.6) -- (530,100.6) ;
\draw    (104,91) -- (104,110.6) ;
\draw    (530,91) -- (530,110.6) ;
\draw  [fill={rgb, 255:red, 255; green, 255; blue, 255 }  ,fill opacity=1 ] (317,100.6) .. controls (317,97.95) and (319.15,95.8) .. (321.8,95.8) .. controls (324.45,95.8) and (326.6,97.95) .. (326.6,100.6) .. controls (326.6,103.25) and (324.45,105.4) .. (321.8,105.4) .. controls (319.15,105.4) and (317,103.25) .. (317,100.6) -- cycle ;
\draw   (316,100) .. controls (316,95.33) and (313.67,93) .. (309,93) -- (217.82,93) .. controls (211.15,93) and (207.82,90.67) .. (207.82,86) .. controls (207.82,90.67) and (204.49,93) .. (197.82,93)(200.82,93) -- (112,93) .. controls (107.33,93) and (105,95.33) .. (105,100) ;
\draw   (409,99) .. controls (409,94.33) and (406.67,92) .. (402,92) -- (377.79,92) .. controls (371.12,92) and (367.79,89.67) .. (367.79,85) .. controls (367.79,89.67) and (364.46,92) .. (357.79,92)(360.79,92) -- (335,92) .. controls (330.33,92) and (328,94.33) .. (328,99) ;
\draw   (105,102) .. controls (105,106.67) and (107.33,109) .. (112,109) -- (199,109) .. controls (205.67,109) and (209,111.33) .. (209,116) .. controls (209,111.33) and (212.33,109) .. (219,109)(216,109) -- (309,109) .. controls (313.67,109) and (316,106.67) .. (316,102) ;
\draw   (328,102) .. controls (328,106.67) and (330.33,109) .. (335,109) -- (357.8,109) .. controls (364.47,109) and (367.8,111.33) .. (367.8,116) .. controls (367.8,111.33) and (371.13,109) .. (377.8,109)(374.8,109) -- (402,109) .. controls (406.67,109) and (409,106.67) .. (409,102) ;
\draw [color={rgb, 255:red, 208; green, 2; blue, 27 }  ,draw opacity=1 ][line width=4.5]    (327,100.6) -- (409,100.6) ;
\draw   (419,102) .. controls (419,106.67) and (421.33,109) .. (426,109) -- (464.5,109) .. controls (471.17,109) and (474.5,111.33) .. (474.5,116) .. controls (474.5,111.33) and (477.83,109) .. (484.5,109)(481.5,109) -- (523,109) .. controls (527.67,109) and (530,106.67) .. (530,102) ;
\draw  [fill={rgb, 255:red, 255; green, 255; blue, 255 }  ,fill opacity=1 ] (409,100.6) .. controls (409,97.95) and (411.15,95.8) .. (413.8,95.8) .. controls (416.45,95.8) and (418.6,97.95) .. (418.6,100.6) .. controls (418.6,103.25) and (416.45,105.4) .. (413.8,105.4) .. controls (411.15,105.4) and (409,103.25) .. (409,100.6) -- cycle ;
\draw   (530,100) .. controls (530,95.33) and (527.67,93) .. (523,93) -- (484.5,93) .. controls (477.83,93) and (474.5,90.67) .. (474.5,86) .. controls (474.5,90.67) and (471.17,93) .. (464.5,93)(467.5,93) -- (426,93) .. controls (421.33,93) and (419,95.33) .. (419,100) ;

\draw (174,65.4) node [anchor=north west][inner sep=0.75pt]    {$i-1$ \quad \fontfamily{ptm}\selectfont spaces \quad};
\draw (343,65.4) node [anchor=north west][inner sep=0.75pt]    {$R$ \fontfamily{ptm}\selectfont spaces \quad};
\draw (432,55.4) node [anchor=north west][inner sep=0.75pt]    {$n-R-i-1$ \quad \fontfamily{ptm}\selectfont spaces \quad};
\draw (194,115.4) node [anchor=north west][inner sep=1.75pt]    {$x$ \quad \fontfamily{ptm}\selectfont cars \quad};
\draw (348,115.4) node [anchor=north west][inner sep=1.75pt]    {$R$ \fontfamily{ptm}\selectfont cars \quad};
\draw (421,114.5) node [anchor=north west][inner sep=10.5pt]    {$m-1-x-R$ \quad \fontfamily{ptm}\selectfont cars\quad};
\draw (99,115.4) node [anchor=north west][inner sep=0.75pt]    {$1$};
\draw (318,116) node [anchor=north west][inner sep=0.75pt]    {$i$};
\draw (525,115.4) node [anchor=north west][inner sep=0.75pt]    {$n$};
\end{tikzpicture}
                \vspace{-.35in}
                \caption{In the figure, spot $i$ is left vacant. The region to the left of spot $i$ consists of the spots numbered $1$ through $i-1$, and there are  $x$ cars parked in those spots, with $x\leq i-1$. The region to the immediate right of spot $i$ consists of spots $i+1$ through $i+R$. The popular region is highlighted in red and contains exactly $R$ cars. 
                Moreover, the spot immediately to the right of the popular region is vacant. 
                The final region, to the far right of the street, consists of spots $i+R+2$ through $n$, and the the remaining $m-1-x-R$ cars park there such that $m-1-x-R\leq n-R-i-1$.}\label{fig:case2b}
                
            \end{figure}
            
            First, consider the section of the street to the left of spot $i$. 
            Let $x$ be the number of cars parked in this section of the street, where $0\leq x \leq m-1$. 
            There are $\binom{m-1}{x}$ ways to select the cars to park in those spots. 
            They are parked according to the pullback parking rule with $x$ cars and $i-1$ spaces, contributing the $|\PF_{x,i-1}(k,\ell)|$ to the count. 
            We then must sum over all possible values of $0\leq x\leq i-1$.
            
            Now consider all cars parked to the right of spot $i$. There are $n-i$ total spots in this region, and $m-1-x$ cars will park there. In this case, the popular region does not extend all the way to the end of the street, hence spot $i+R+1$ must remain open. Since there are $n-i$ total spots to the right of spot $i$ and car $m$ backs into spot $i$, it must be that $1 \leq R \leq n-i-1$. 
            First we select the cars to park in the popular region, which can be done in $\binom{m-1-x}{R}$ ways. 
            Next, spot $i$ must remain empty, thus there are $|\C_{R,R}(k,\ell)|$ ways to park the cars parks in the popular region. 
            Now consider the remaining $m-1-x-R$ cars, which park in spots $i+R+2$ through $n$. 
            Since spot $i+R+1$ must remain open, we use again use the set of contained pullback parking functions, which contributes $|\C_{m-1-x-R,n-R-i-1}(k,\ell)|$ to the count.

            Now we must account for all the possible preferences for car $m$. 
            Since car $m$ backs into spot $i$ and can only back up a maximum of $k$ spots, or, if $R < k$, car $m$ can prefer any spot of the popular region. Hence, the preferences of car $m$ can be $i+1,i+2, \ldots i+\min(R,k)$, which implies that there are $\min(R,k)$ possible preferences for car $m$ so that it backs into spot $i$.
             To conclude, we must sum over all of the possible values of $1\leq R\leq n-i-1$.

            Finally, we sum over all possible $1\leq i \leq n$. Note that $i$ cannot be equal to $n$ in this case, as we must be able to back into $i$, but when $i=n$, this case returns zero, so it is left in the formula in order to simplify the final equation.
            
            Thus, the total count contributed by Subcase~2b is given by 
    \begin{align}\label{case2b}
        &\sum_{i=1}^{n}\sum_{x=0}^{m-1}\sum_{R=1}^{n-i-1}V(i,x,R),
            \end{align}
             where $V(i,x,R)=\binom{m-1}{x}|\PF_{x,i-1}(k,\ell)|\binom{m-1-x}{R}|\C_{R,R}(k,\ell)||\C_{m-1-x-R,n-R-i-1}(k,\ell)|\min(R,k)$. 

            \bigskip
            
            \noindent{\textbf{Case 3:}}
            In this case, car $m$ prefers a spot to the left of spot $i$ and pulls forward into spot $i$.
        
            The popular region, as defined above in Definition \ref{def:popular region}, is located to the left of spot $i$.
            
            We now consider the following two subcases:
            \begin{enumerate}[leftmargin=1in]
            
                \item[\textbf{Subcase 3a:}] Every spot to the left of spot $i$ is occupied, so the popular region extends from spot $1$ to spot $i-1$, hence $R=i-1$.
            
                \item[\textbf{Subcase 3b:}] The popular region does not extend to spot $1$, hence $R \leq i-2$. In this case, car $m$ pulls forward into spot $i$, hence $R\geq k+1$.  Thus $k+1\leq R\leq i-2$.
            \end{enumerate}
            
            We now count the preferences in each of these subcases.

            \noindent    \textbf{Subcase 3a:} Every spot to the left of spot $i$ is occupied, so the popular region extends from spot $1$ to spot $R=i-1$. We illustrate this case in Figure \ref{fig:case3a}.
            
            \begin{figure}[ht]
            \centering\tikzset{every picture/.style={line width=0.75pt}} 

\begin{tikzpicture}[x=0.75pt,y=0.75pt,yscale=-1,xscale=1]

\draw [fill={rgb, 255:red, 0; green, 0; blue, 0 }  ,fill opacity=1 ][line width=2.25]    (124,120.6) -- (550,120.6) ;
\draw    (124,111) -- (124,130.6) ;
\draw    (550,111) -- (550,130.6) ;
\draw  [fill={rgb, 255:red, 255; green, 255; blue, 255 }  ,fill opacity=1 ] (337,120.6) .. controls (337,117.95) and (339.15,115.8) .. (341.8,115.8) .. controls (344.45,115.8) and (346.6,117.95) .. (346.6,120.6) .. controls (346.6,123.25) and (344.45,125.4) .. (341.8,125.4) .. controls (339.15,125.4) and (337,123.25) .. (337,120.6) -- cycle ;
\draw   (336,118) .. controls (336,113.33) and (333.67,111) .. (329,111) -- (237.82,111) .. controls (231.15,111) and (227.82,108.67) .. (227.82,104) .. controls (227.82,108.67) and (224.49,111) .. (217.82,111)(220.82,111) -- (132,111) .. controls (127.33,111) and (125,113.33) .. (125,118) ;
\draw   (549,119) .. controls (549,114.33) and (546.67,112) .. (542,112) -- (455.97,112) .. controls (449.3,112) and (445.97,109.67) .. (445.97,105) .. controls (445.97,109.67) and (442.64,112) .. (435.97,112)(438.97,112) -- (355,112) .. controls (350.33,112) and (348,114.33) .. (348,119) ;
\draw   (125,122) .. controls (125,126.67) and (127.33,129) .. (132,129) -- (219,129) .. controls (225.67,129) and (229,131.33) .. (229,136) .. controls (229,131.33) and (232.33,129) .. (239,129)(236,129) -- (329,129) .. controls (333.67,129) and (336,126.67) .. (336,122) ;
\draw   (348,122) .. controls (348,126.67) and (350.33,129) .. (355,129) -- (436,129) .. controls (442.67,129) and (446,131.33) .. (446,136) .. controls (446,131.33) and (449.33,129) .. (456,129)(453,129) -- (542,129) .. controls (546.67,129) and (549,126.67) .. (549,122) ;
\draw [color={rgb, 255:red, 208; green, 2; blue, 27 }  ,draw opacity=1 ][line width=4.5]    (124,120.6) -- (337,120.6) ;

\draw (166,86.4) node [anchor=north west][inner sep=0.75pt]    {$R=i-1$ \quad \fontfamily{ptm}\selectfont spaces};
\draw (408,86.4) node [anchor=north west][inner sep=0.75pt]    {$n-i$ \quad \fontfamily{ptm}\selectfont spaces};
\draw (176,136.4) node [anchor=north west][inner sep=0.75pt]    {$R=i-1$ \quad \fontfamily{ptm}\selectfont cars};
\draw (369,136.4) node [anchor=north west][inner sep=0.75pt]    {$m-1-( i-1)$ \quad \fontfamily{ptm}\selectfont cars};
\draw (119,136.4) node [anchor=north west][inner sep=0.75pt]    {$1$};
\draw (339,136.4) node [anchor=north west][inner sep=0.75pt]    {$i$};
\draw (545,136.4) node [anchor=north west][inner sep=0.75pt]    {$n$};
\end{tikzpicture}
            \vspace{-.35in}
                \caption{In the figure,  spot $i$ is vacant. The popular region consists of spots $1$ through $i-1$, hence $R=i-1$,  and we highlight the region in red. The region to the right of spot $i$ consists of spots $i+1$ through $n$, which will have the remaining $m-1-(i-1)=m-i$ cars parked in that region.}
            \label{fig:case3a}    
            \end{figure}
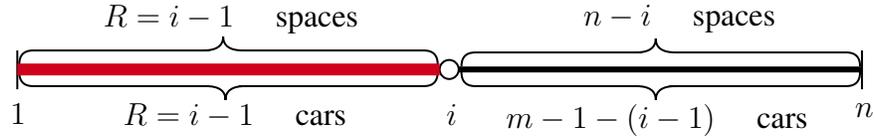
            
            Since in this case, all of the spots from $1$ to $i-1$ are occupied, we can choose $i-1$ cars to park in the popular region and use a pullback parking function to park them. This contributes $\binom{m-1}{i-1}|\PF_{i-1, i-1}(k,\ell)|$ to the count.
            
            We now park the cars to the right of spot $i$. Because we are parking these cars to the right of spot $i$, we must again use the set of contained pullback parking functions, this count is given by $|\C_{m-i,n-i}(k,\ell)|$.
            
            Next, we must consider the possible preferences for car $m$. 
            First, since all spots $1$ to $i-1$ are occupied, the parameter $k$ does not affect the preferences for car $m$. 
            Furthermore, since car $m$ pulls forward into spot $i$, car $m$ can only prefer up to $\ell$ spaces behind spot $i$. 
            However, we must consider that if $R<\ell$, it can only prefer the spots in the popular region.  
            Thus, the number of preferences for car $m$ to pull forward into spot $i$ is given by $\min(R,\ell)=\min(i-1,\ell)$.
            
            We then sum over all possible values of $i$ allowing for the popular region to exist. Hence, $2\leq i\leq n$. However, as before, for sake of simplicity, we index the sum over $1\leq i \leq n$, as the term $i=1$ will contribute zero to the sum.
            
            Thus, Subcase~3a contributes the following to the total count
\begin{align}\label{case3a}
                 \sum_{i=1}^nY(i)\coloneqq\sum_{i=1}^n\binom{m-1}{i-1}\cdot |\PF_{i-1,i-1}(k,\ell)|\cdot |\C_{m-i,n-i}(k,\ell)| \cdot \min(i-1,\ell).
            \end{align}

            \bigskip
            
            \noindent\textbf{Subcase 3b:} The popular region does not extend to spot $1$, Hence $R\leq i-2$. Moreover, as car $m$ pulls forward into spot $i$, we have that $k+1\leq R$. Hence $k+1\leq R\leq i-2$. We illustrate this case in Figure \ref{fig:case3b}.
            
            \begin{figure}[ht]
            
                \centering\tikzset{every picture/.style={line width=0.75pt}} 

\begin{tikzpicture}[x=0.75pt,y=0.75pt,yscale=-1,xscale=1]

\draw [fill={rgb, 255:red, 0; green, 0; blue, 0 }  ,fill opacity=1 ][line width=2.25]    (124,120.6) -- (550,120.6) ;
\draw    (124,111) -- (124,130.6) ;
\draw    (550,111) -- (550,130.6) ;
\draw  [fill={rgb, 255:red, 255; green, 255; blue, 255 }  ,fill opacity=1 ] (337,120.6) .. controls (337,117.95) and (339.15,115.8) .. (341.8,115.8) .. controls (344.45,115.8) and (346.6,117.95) .. (346.6,120.6) .. controls (346.6,123.25) and (344.45,125.4) .. (341.8,125.4) .. controls (339.15,125.4) and (337,123.25) .. (337,120.6) -- cycle ;
\draw   (550,119) .. controls (550,114.33) and (547.67,112) .. (543,112) -- (455.94,112) .. controls (449.27,112) and (445.94,109.67) .. (445.94,105) .. controls (445.94,109.67) and (442.61,112) .. (435.94,112)(438.94,112) -- (354,112) .. controls (349.33,112) and (347,114.33) .. (347,119) ;
\draw   (336,119) .. controls (336,114.33) and (333.67,112) .. (329,112) -- (287.27,112) .. controls (280.6,112) and (277.27,109.67) .. (277.27,105) .. controls (277.27,109.67) and (273.94,112) .. (267.27,112)(270.27,112) -- (228,112) .. controls (223.33,112) and (221,114.33) .. (221,119) ;
\draw   (347,122) .. controls (347,126.67) and (349.33,129) .. (354,129) -- (437.07,129) .. controls (443.74,129) and (447.07,131.33) .. (447.07,136) .. controls (447.07,131.33) and (450.4,129) .. (457.07,129)(454.07,129) -- (543,129) .. controls (547.67,129) and (550,126.67) .. (550,122) ;
\draw   (221,123) .. controls (221,127.67) and (223.33,130) .. (228,130) -- (267.29,130) .. controls (273.96,130) and (277.29,132.33) .. (277.29,137) .. controls (277.29,132.33) and (280.62,130) .. (287.29,130)(284.29,130) -- (329,130) .. controls (333.67,130) and (336,127.67) .. (336,123) ;
\draw [color={rgb, 255:red, 208; green, 2; blue, 27 }  ,draw opacity=1 ][line width=4.5]    (221,120.6) -- (337,120.6) ;
\draw   (124,122) .. controls (124,126.67) and (126.33,129) .. (131,129) -- (158,129) .. controls (164.67,129) and (168,131.33) .. (168,136) .. controls (168,131.33) and (171.33,129) .. (178,129)(175,129) -- (205,129) .. controls (209.67,129) and (212,126.67) .. (212,122) ;
\draw  [fill={rgb, 255:red, 255; green, 255; blue, 255 }  ,fill opacity=1 ] (211.4,120.6) .. controls (211.4,117.95) and (213.55,115.8) .. (216.2,115.8) .. controls (218.85,115.8) and (221,117.95) .. (221,120.6) .. controls (221,123.25) and (218.85,125.4) .. (216.2,125.4) .. controls (213.55,125.4) and (211.4,123.25) .. (211.4,120.6) -- cycle ;
\draw   (211,120) .. controls (211,115.33) and (208.67,113) .. (204,113) -- (177.5,113) .. controls (170.83,113) and (167.5,110.67) .. (167.5,106) .. controls (167.5,110.67) and (164.17,113) .. (157.5,113)(160.5,113) -- (131,113) .. controls (126.33,113) and (124,115.33) .. (124,120) ;

 \draw (147,87) node [anchor=north west][inner sep=-4.75pt]   [align=left] {\begin{minipage}[lt]{29.34pt}\setlength\topsep{0pt}
 \begin{center}
 {\fontfamily{ptm}\selectfont spaces}
\end{center}

\end{minipage}};
\draw (139,76.4) node [anchor=north west][inner sep=-4.75pt]    {$i-R-2$};
\draw (152,137.4) node [anchor=north west][inner sep=0.75pt]    {$x$ \quad \fontfamily{ptm}\selectfont cars};
\draw (256,86.4) node [anchor=north west][inner sep=0.75pt]    {$R$ \quad \fontfamily{ptm}\selectfont spaces};
\draw (257,136.4) node [anchor=north west][inner sep=0.75pt]    {$R$ \quad \fontfamily{ptm}\selectfont cars};
\draw (409,86.4) node [anchor=north west][inner sep=0.75pt]    {$n-i$ \quad \fontfamily{ptm}\selectfont spaces};
\draw (381,136.4) node [anchor=north west][inner sep=0.75pt]    {$m-1-x-R$ \quad \fontfamily{ptm}\selectfont cars};
\draw (119,136.4) node [anchor=north west][inner sep=0.75pt]    {$1$};
\draw (339,135.4) node [anchor=north west][inner sep=0.75pt]    {$i$};
\draw (545,136.4) node [anchor=north west][inner sep=0.75pt]    {$n$};
\end{tikzpicture}
                \vspace{-.35in}
                \caption{In the figure, spot $i$ is vacant. 
                The popular region consists of $R\leq i-2$ cars and is highlighted in red.
                Note spot $i-R-1$ is empty.
                The region to the far left of the street consists of spots $1$ through $i-R-2$, in which $x$ cars park, and $0\leq x\leq i-R-2$. 
                The final region to the right of spot $i$ consists of spots $i+1$ through $n$, in which the remaining $m-1-x-R$ cars park. Note $m-1-x-R\leq n-i$.}
            
                \label{fig:case3b}
                
            \end{figure}
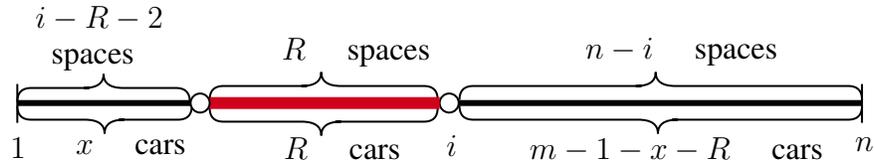
            
            We start by parking the cars to the far left of the street. From $m-1$ cars we choose $x$ cars to park, where $0\leq x\leq m-1$. This can be done in $\binom{m-1}{x}$ ways. The number of spots in this region is $i-R-2$, as we have to subtract the size of the popular region from $i$, and spot $i-R-1$ must remain open. We then park the $x$ cars in the $i-R-2$ spots, which can be done in $|\PF_{x,i-R-2}(k,\ell)|$ ways.
            
            Now we park the cars in the popular region. From the remaining $m-1-x$ cars we select $R$ cars to park in the popular region. This can be done in $\binom{m-1-x}{R}$ ways. Then we park these cars using a contained pullback parking function in order to leave spot $i-R-1$ open. This contributes $|\C_{R,R}(k,\ell)|$ to the count.
            
            We now park all of the cars to the right of spot $i$. There is no choice of cars here since we must use the remaining cars that have yet to be parked. 
            Because we are parking to the right of spot $i$, we use the contained pullback parking function in order to prevent any cars from backing into spot $i$. This contributes $|\C_{m-1-x-R,n-i}(k,\ell)|$ to the count.
            
            Next, we must consider the number of possible preferences for car $m$. In this case, the number of preferences for car $m$ is given by $\min(R-k,\ell)$ because, if $R < \ell$, car $m$ can either prefer any spot from $i-R+k$ to spot $i-1$, or if $R \geq \ell$, it can prefer $\ell$ spaces before spot $i$ in order to pull forward up to $\ell$ spaces into $i$.
            
            We must sum over $3\leq i\leq n$, 
            however as before, we take the sum over $i$ starting at 1, since those terms will contribute zero to the sum. We also must sum
            over $0\leq x\leq m-1$, and over $k+1 \leq R \leq i-2$. 
            Thus, the total count contributed by Subcase~3b is given by
    \begin{align}
            \label{case3b} 
&\hspace{.2in}\sum_{i=1}^n\sum_{x=0}^{m-1}\sum_{R=k+1}^{i-2}W(i,x,R),
            \end{align}
            where $W(i,x,R)=\binom{m-1}{x} |\PF_{x,i-R-2}(k,\ell)| \cdot\binom{m-1-x}{R}|\C_{R,R}(k,\ell)| \cdot |\C_{m-1-x-R,n-i}(k,\ell)|\cdot \min(R-k,\ell)$, and if $a>b$, then $|\PF_{a,b}|=0$.

            The result follows from adding the counts from Equations \eqref{case1}, \eqref{case2a}, \eqref{case2b}, \eqref{case3a}, and~\eqref{case3b}.
        \end{proof}

        We can now specify $\ell=n-1$, in which case $\PF_{m,n}(k,n-1)=\PF_{m,n}(k)$, as the parameter $\ell=n-1$ does not restrict the movement forward. With this in mind, we arrive at the following alternate formula for the number of $k$-Naples $(m,n)$-parking functions.
        
    \begin{corollary}\label{cor:knaples mnpfs}
        
            Let $m\leq n$ be positive integers, and fix $0\leq k \leq n-1$. Then the number of $k$-Naples $(m,n)$-parking functions satisfies the recurrence 
            
            \begin{align*}
                |\PF_{m,n}(k)|=
                &\sum_{i=1}^n\left[X(i)+
                Y(i)+
                \sum_{x=0}^{m-1}\left(Z(i,x)+\sum_{R=1}^{n-i-1}V(i,x,R)+
                \sum_{R=k+1}^{i-2}W(i,x,R)\right)\right],
            \end{align*}
            where \\
            $X(i)=\binom{m-1}{n-i}
                |\PF_{m-1-n+i, i-1}(k)||\C_{n-i,n-i}(k, n-1)| \min(k,n-i)$,
                \\
                $Y(i)= \binom{m-1}{i-1}|\PF_{i-1}(k)| |\C_{m-i,n-i}(k,n-1)| (i-1)$,
                \\   
                $Z(i,x)= \binom{m-1}{x}|\PF_{x,i-1}(k)| |\C_{m-1-x,n-i}(k,n-1)|$,
                \\
                $V(i,x,R)=\binom{m-1}{x}|\PF_{x,i-1}|\binom{m-1-x}{R}|\C_{R,R}(k, n-1)||\C_{m-1-x-R,n-R-i-1}(k, n-1)| \min(R,k)$, 
                \\
                $W(i,x,R)=\binom{m-1}{x} |\PF_{x,i-R-2}(k)| \binom{m-1-x}{R}|\C_{R,R}(k,n-1)| |\C_{m-1-x-R,n-i}(k,n-1)| (R-k)$.
            
        \end{corollary}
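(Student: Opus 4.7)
The plan is to derive this corollary as a direct specialization of Theorem \ref{thm:pullback} with $\ell = n-1$. First I would observe that when $\ell = n-1$, the forward motion of each car is unrestricted on a street of length $n$, since a car can always check up to every spot on the street ahead of its preference. Consequently, the $(k,n-1)$-pullback rule coincides exactly with the $k$-Naples parking rule, so $\PF_{m,n}(k,n-1) = \PF_{m,n}(k)$. With this identification, Theorem \ref{thm:pullback} already gives a recursion for $|\PF_{m,n}(k)|$ in terms of the five pieces $X(i), Y(i), Z(i,x), V(i,x,R), W(i,x,R)$; my job is then to substitute $\ell = n-1$ into each piece and simplify.

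The substitution is routine except for the minimum expressions that involve $\ell$. I would handle these two cases carefully. In the piece $Y(i)$, the factor $\min(i-1,\ell)$ becomes $\min(i-1, n-1)$; since the sum runs over $1 \leq i \leq n$, we always have $i-1 \leq n-1$, so this collapses to $i-1$, giving the stated factor of $(i-1)$ in $Y(i)$. In the piece $W(i,x,R)$, the factor $\min(R-k,\ell)$ becomes $\min(R-k, n-1)$; since the inner sum restricts to $R \leq i-2 \leq n-2$, we have $R - k \leq n-2 < n-1$, so this collapses to $R-k$, giving the stated factor in $W(i,x,R)$. The factors $\min(k, n-i)$ in $X(i)$ and $\min(R,k)$ in $V(i,x,R)$ do not involve $\ell$ and so carry over unchanged.

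Finally, for each remaining factor $|\PF_{a,b}(k,\ell)|$ and $|\C_{a,b}(k,\ell)|$ appearing in $X, Y, Z, V, W$, I would substitute $\ell = n-1$ to rewrite them as $|\PF_{a,b}(k)|$ and $|\C_{a,b}(k, n-1)|$ respectively, matching the notation of the corollary. Assembling these simplifications yields exactly the claimed recurrence. I do not anticipate a real obstacle here: the content is entirely in Theorem \ref{thm:pullback}, and this proof amounts to a specialization argument together with two small domain-based simplifications of $\min$ expressions.
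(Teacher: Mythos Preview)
Your proposal is correct and matches the paper's approach exactly: the paper derives this corollary simply by noting that setting $\ell = n-1$ removes the forward restriction so that $\PF_{m,n}(k,n-1)=\PF_{m,n}(k)$, and then specializing Theorem~\ref{thm:pullback}. Your additional justification for why the two $\min$ expressions collapse to $i-1$ and $R-k$ is more detail than the paper itself provides, but it is entirely in the same spirit.
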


By further specifying that $\ell=n-1$ and $m=n$, \Cref{thm:pullback} gives an alternate formula for the number of $k$-Naples parking functions of length $n$, which was first given by 
Christensen et al.~\cite{knaple}. We conclude by stating their theorem.
        \begin{theorem}\cite[Theorem 1.1]{knaple}\label{thm:rational recursion}
            If $k,n \in \NN$ with $0\leq k \leq n-1$, then the number of $k$-Naples parking functions of length $n+1$ is counted recursively by 
            \begin{align*}
                |\PF_{n+1}(k)|=\sum_{i=0}^{n}\binom{n}{i}\min((i+1)+k,n+1)|\PF_{i}(k)|(n-i+1)^{n-i-1}.
            \end{align*}           
        \end{theorem}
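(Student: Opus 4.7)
The plan is to generalize the classical decomposition proof of $|\PF_{n+1}|=(n+2)^n$ by conditioning on where the final car (car $n+1$) parks in a $k$-Naples parking function of length $n+1$. For each $i\in\{0,1,\ldots,n\}$, the goal is to count the $k$-Naples PFs in which car $n+1$ occupies spot $i+1$, and then sum these counts over $i$.

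First I would argue that if car $n+1$ parks at spot $i+1$, then spot $i+1$ must be empty throughout the parking of the first $n$ cars (since cars do not leave). From this I would show that the first $n$ cars split cleanly into a left block of $i$ cars filling $\{1,\ldots,i\}$ and a right block of $n-i$ cars filling $\{i+2,\ldots,n+1\}$, and moreover that left-block cars have preferences in $\{1,\ldots,i\}$ while right-block cars have preferences in $\{i+2,\ldots,n+1\}$. Indeed, a car with preference $\leq i$ trying to pass forward into the right block would get stopped at the empty spot $i+1$, and a car with preference $>i+1$ whose $k$-Naples back-up reached $i+1$ would park there, each contradicting our setup. The factor $\binom{n}{i}$ records which $i$ of the first $n$ cars belong to the left block, and the left block is literally a $k$-Naples PF on $i$ spots, contributing $|\PF_i(k)|$.

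Next I would count the preferences of car $n+1$ that cause it to land at spot $i+1$. When it arrives, only spot $i+1$ is empty, so car $n+1$ parks there precisely if its preference is $i+1$ itself (one option), a spot in $\{i+2,\ldots,\min(i+1+k,n+1)\}$ from which it backs up into $i+1$ (contributing $\min(k,n-i)$ options), or a spot in $\{1,\ldots,i\}$ from which it pulls forward through the fully-occupied left block (contributing $i$ options). Adding these gives $1+i+\min(k,n-i)=\min((i+1)+k,n+1)$, which is exactly the factor appearing in the theorem.

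The hard part will be justifying the right-block factor $(n-i+1)^{n-i-1}=|\PF_{n-i}|$. After shifting indices, the right block is a $k$-Naples configuration on $n-i$ spots with $n-i$ cars, subject to the extra constraint that no car may back up past the block's left boundary (because doing so would fill the forbidden spot $i+1$). This constraint is strictly more restrictive than standard $k$-Naples on a truncated street, and small examples show the valid right-block configurations are \emph{not} literally classical parking functions, yet their count coincidentally matches $(n-i+1)^{n-i-1}$. To close the argument I would seek an explicit bijection from valid right-block configurations to classical parking functions of length $n-i$, or alternatively show that both families satisfy the same recurrence with matching base cases; this is the step I expect to demand the deepest work. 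Once it is settled, summing $\binom{n}{i}\min((i+1)+k,n+1)\,|\PF_i(k)|\,(n-i+1)^{n-i-1}$ over $i\in\{0,1,\ldots,n\}$ gives the stated recursion.
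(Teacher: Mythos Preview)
Your proposal is correct and matches the proof strategy of the original source \cite{knaple}, not an argument given in the present paper. This paper merely \emph{cites} Theorem~\ref{thm:rational recursion}; it does not prove it. The paper's own recursion (Theorem~\ref{thm:pullback}) uses the same last-car decomposition you describe, but in the general $(k,\ell)$-pullback $(m,n)$ setting, and it leaves the right-block factor abstractly as $|\C_{t,t}(k,\ell)|$, evaluated via the permutation-summation formula of Theorem~\ref{thm:pb contained} rather than collapsed to a closed form. When specialized to $m=n$ and $\ell=n-1$, the cases of Theorem~\ref{thm:pullback} that survive (Case~1 with $x=i-1$, Subcase~2a, Subcase~3a) are exactly your three preference counts $1$, $\min(k,n-i)$, and $i-1$ for the last car, so the two decompositions coincide.

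The ``hard part'' you isolate---that the contained $k$-Naples configurations on $t$ spots number $(t+1)^{t-1}$---is precisely \cite[Lemma~3.4]{knaple}, which this paper explicitly acknowledges in Section~\ref{sec:future} as an external result it does not re-derive. So your plan is sound, you have correctly identified both the skeleton of the original argument and the one non-obvious lemma needed to close it, and your instinct that this step requires a separate bijective or recursive argument (rather than falling out of the decomposition itself) is accurate.
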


    \section{Future directions}\label{sec:future}
    
        We conclude with the following directions for future work. 
        
        
        
        \begin{enumerate}

            \item Our definition of $(k,\ell)$-pullback $(m,n)$-parking functions allows a car whose preference is occupied to back up to $k$ spots, proceed forward up to $\ell$ spots from their preference, and park in the first spot it finds available. 
            Both the parameters $k$ and $\ell$ are set globally. One alternative is for each $i\in [m]$ to have its own nonnegative integer parameters $k_i$ and $\ell_i$, which encode how many spots car $i$ can back up to in its attempt to park, and if all of those spots are taken, then it can check $\ell_i$ spots after its preference, parking in the first available spot it encounters among those $\ell_i$ spots. In this way, whenever $k_i=k$ and $\ell_i=\ell$ for all $i\in[m]$, then one recovers the definition of for $(k,\ell)$-pullback $(m,n)$-parking functions. Also by setting $k_i=0$ for all $i\in[m]$, this would be precisely the definition of interval parking function, which have been studied by Aguilar et al.~\cite{aguilarfraga2024intervalellintervalrationalparking} and Colaric et al.~\cite{bib:Colaric2020IntervalPF}.
            
            \item Subsets of parking functions has led to many interesting enumerative results. 
            For example, it is well-known weakly increasing parking functions are Catalan objects. 
            Reutercrona, Wang, and Whidden~\cite{ICERM_unit_interval} established that the number of weakly increasing unit interval parking functions of length $n$ is given by $2^{n-1}$.
            Fang, Harris, Kamau, and Wang~\cite[Corollary 3.3]{fang2024vacillatingparkingfunctions} established that the number of weakly increasing vacillating parking functions of length $n$ is given by the numerator of the $n$th convergent of the continued fraction of $\sqrt{2}$. 
            It remains an open problem to give enumerations for the weakly increasing subset of $(k,\ell)$-pullback $(m,n)$-parking functions.

            \item We used the technique of counting through permutations to count the number of contained pullback parking functions. We were unable to find closed formulas for these counts. 
            Christensen et al. \cite[Lemma 3.4]{knaple} showed that the cardinality of contained $k$-Naples parking functions of length $n$ is $(n+1)^{n-1}$. It remains an open problem to determine these closed formulas for other specializations of $(k,\ell)$-pullback $(m,n)$-parking functions. 
            
            \item There are many generalizations of parking functions, such as parking functions with different sized cars which are called parking sequences \cite{adeniran2021increasing} and parking assortments \cite{inv_assort, assortcount}. 
            One could recreate our study of pullback parking functions with these generalizations of parking functions. 
        \end{enumerate}

\vspace{24pt}

 \noindent {\bf\Large Acknowledgments}\vspace*{6pt}\\
 The authors were supported in part via award NSF DMS-2150434. Martinez was also supported by the NSF Graduate Research Fellowship Program under Grant No. 2233066. The authors would like to thank the anonymous reviewers.

\bigskip
{\bf \large Contact Information}\\

\begin{tabular}{lcl}
Jennifer Elder	&& Dept. of Computer Science, Mathematics and Physics\\
 jelder8$@$missouriwestern.edu &&  Missouri Western State University\\
 &&St. Joseph, MO 6450, USA\\
 	&& \includegraphics[height=10pt]{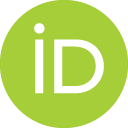}  \url{https://orcid.org/0000-0003-2018-5017}\\
				&& \\
 Pamela E. Harris	&& Department of Mathematical Sciences \\
 peharris$@$uwm.edu && University of Wisconsin-Milwaukee \\
 && Milwaukee, WI 53211, USA\\
 &&\includegraphics[height=10pt]{ORCIDiD_icon128x128.png} \url{https://orcid.org/0000-0002-3049-991X}\\
 					&& \\

 Lybitina Koene	&& Department of Mathematics\\
 lybitinakoene$@$vt.edu &&  Virginia Polytechnic Institute\\
 && Blacksburg, VA 24060, USA \\
 &&\includegraphics[height=10pt]{ORCIDiD_icon128x128.png} \url{https://orcid.org/0009-0000-8234-6686}\\
					&& \\

 Ilana Lavene&& Department of Mathematical Sciences  \\
 ilanalavene$@$gmail.com && University of Delaware\\
 && Newark, DE 19711, USA \\
 &&\includegraphics[height=10pt]{ORCIDiD_icon128x128.png} \url{https://orcid.org/0009-0005-7980-7425}\\
 					&& \\

 Lucy Martinez&& Department of Mathematics, Rutgers University\\
 lucy.martinez$@$rutgers.edu && Piscataway, NJ 08854, USA \\
 &&\includegraphics[height=10pt]{ORCIDiD_icon128x128.png} \url{https://orcid.org/0000-0003-3144-4053}\\
 					&& \\

 Molly Oldham&& Department of Mathematical Sciences\\
 molly.oldham343$@$gmail.com &&  University of Delaware\\
 && Newark, DE 19711, USA \\
 &&\includegraphics[height=10pt]{ORCIDiD_icon128x128.png} \url{https://orcid.org/0009-0007-7708-1426}\\
 					&& \\

 \end{tabular}


\begin{thebibliography}{99}

\bibitem{adeniran2021increasing} Ayomikun Adeniran and Catherine H. Yan, {\it On increasing and invariant parking sequence}, Australas. J. Combin. 79 (2021) 167--182.

\bibitem{aguilarfraga2024intervalellintervalrationalparking} Tomás Aguilar-Fraga, Jennifer Elder, Rebecca E. Garcia, Kimberly P. Hadaway, Pamela E. Harris, Kimberly J. Harry, Imhotep B. Hogan, Jakeyl Johnson, Jan Kretschmann, Kobe Lawson-Chavanu, J. Carlos Martínez Mori, Casandra D. Monroe, Daniel Quiñonez, Dirk Tolson III, and Dwight Anderson Williams II, {\it Interval and $\ell$-interval rational parking functions}, Discrete Math. Theor. Comput. Sci. 26(1) (2024) Paper No. 10, 29.

\bibitem{BaumgardnerHonorsContract} Alyson Baumgardner, {\it The Naples parking function}, Florida Gulf Coast University (2019).

\bibitem{inv_assort} Douglas M. Chen, Pamela E. Harris, J. Carlos Mart\'{i}nez Mori,  Eric J. Pab{\'o}n-Cancel, and Gabriel Sargent, {\it Permutation invariant parking assortments}, Enumer. Comb. Appl. 4(1) (2024) Paper No. S2R4, 25.


\bibitem{knaple} Alex Christensen,  Pamela E. Harris, Zakiya Jones, Marissa Loving, Andr\'{e}s Ramos Rodr\'{\i}guez, Joseph Rennie, and Gordon Rojas Kirby, {\it A generalization of parking functions allowing backward movement}, Electron. J. Combin. 27 (2020).


\bibitem{bib:Colaric2020IntervalPF} Emma Colaric, Ryan DeMuse, Jeremy L.  Martin, and Mei Yin, {\it Interval parking functions}, Adv. in Appl. Math. 123 (2021) Paper No. 102129, 17.

\bibitem{countingthroughperms} Laura Colmenarejo, Pamela E. Harris, Zakiya Jones, Christo Keller, Andr\'es Ramos Rodr\'iguez, Eunice Sukarto, and Andr\'es R. Vindas-Mel\'endez, {\it Counting {$k$}-{N}aples parking functions through permutations and the {$k$}-{N}aples area statistic}, Enumer. Comb. Appl. 1 (2021) Paper No. S2R11, 16.


\bibitem{fang2024vacillatingparkingfunctions} Bruce Fang, Pamela E. Harris, Brian M. Kamau, and David Wang, {\it Vacillating parking functions}, arXiv:2402.02538 (2024). To appear in J. Comb.


\bibitem{assortcount} Spencer J. Franks, Pamela E. Harris, Kimberly J. Harry, Jan Kretschmann, and Megan Vance, {\it Counting parking sequences and parking assortments through permutations}, Enumer. Comb. Appl. 4 (2024) Paper No. S2R2, 10.


\bibitem{github} Lybitina Koene, Ilana Lavene, and Molly Oldham, {\it GitHub repository for pullback parking function experiments}, available at \url{https://github.com/lybitinakoene/Pullback-Parking-Functions.git}.


\bibitem{konheim1966occupancy} Alan G. Konheim and Benjamin Weiss, {\it An occupancy discipline and applications}, SIAM J. Appl. Math. 14 (1966) 1266--1274.


\bibitem{ICERM_unit_interval} Eva Reutercrona and Yuxuan (Susan) Wang and Juliet Whidden, {\it Parking functions with fixed ascent and descent sets}, Summer@ICERM, End of Summer Report (2022).

\bibitem{Spiro} Sam Spiro, {\it Subset parking functions}, J. Integer Seq. 22 (2019) Art. 19.7.3, 15.

\bibitem{StanleyECVol2} Richard P. Stanley, {\it Enumerative Combinatorics. {V}ol. 2}, Cambridge Studies in Advanced Mathematics, vol. 62, Cambridge University Press, Cambridge (1999).
\end{thebibliography}
\end{document}